\renewcommand{\Im}{\operatorname{Im}}
\newcommand{\defeq}{\stackrel{\rm{def}}{=}}
\newcommand{\R}{\mathbb R}
\newcommand{\C}{\mathbb C}
\newtheorem{theorem}{Theorem}[section]
\newtheorem{definition}[theorem]{Definition}
\newtheorem{proposition}{Proposition}[section]
\newtheorem{lemma}[proposition]{Lemma}
\newtheorem{corollary}[proposition]{Corollary}
\theoremstyle{remark}
\numberwithin{equation}{section}
\begin{document}
	\title[Well-posedness and blow-up in generalized Hartree]
	{On well-posedness and blow-up\\ 
	in the generalized Hartree equation} 
	
	\author[Anudeep K. Arora]{Anudeep Kumar Arora}
	\address{Department of Mathematics  \& Statistics\\
		Florida International University,  Miami, FL, USA}
	\curraddr{}
	\email{simplyandy7@gmail.com}
	\thanks{} 
	
	\author[Svetlana Roudenko]{Svetlana Roudenko}
	\address{Department of Mathematics \& Statistics\\
		Florida International University,  Miami, FL, USA}
	\curraddr{}
	\email{sroudenko@fiu.edu}
	\thanks{}
	
	\subjclass[2010]{Primary: 35Q55, 35Q40; secondary:  37K05} 
	
	\keywords{Hartree equation, Choquard-Pekar equation, convolution nonlinearity, global well-posedness, blow-up criteria}
	
	\date{} 
	
\begin{abstract}
We study the generalized Hartree equation, which is a nonlinear Schr\"odinger-type equation with a nonlocal potential $iu_t + \Delta u  + (|x|^{-b} \ast |u|^p)|u|^{p-2}u=0, x \in \mathbb{R}^N$.
We establish the local well-posedness at the non-conserved critical regularity $\dot{H}^{s_c}$ for $s_c \geq 0$, which also includes the energy-supercritical regime $s_c>1$ 
(thus, complementing the work in [3], where the authors obtained the $H^1$ well-posedness in the intercritical regime together with classification of solutions under the mass-energy threshold). We next extend the local theory to global: for small data we obtain global in time existence and for initial data with positive energy and certain size of variance we show the finite time blow-up (blow-up criterion). Both of these results hold regardless of the criticality of the equation. 
In the intercritical setting the criterion produces blow-up solutions with the initial values above the mass-energy threshold. 
We conclude with examples showing currently known thresholds for global vs. finite time behavior. 
\end{abstract}

\maketitle

\section{Introduction}
We consider the focusing generalized Hartree (gH) equation of the form 
\begin{align}\label{gH}
iu_t+\Delta u+\left(\frac1{|x|^{b}}\ast|u|^p\right)|u|^{p-2}u=0,\quad (x,t) \in \R^N \times \R 
\end{align}
for $p\geq 2$ and $0<b<N$. 
The equation \eqref{gH} is a generalization of the standard Hartree equation with $p=2$, which arises, for example, as an effective evolution equation in the mean-field limit of many-body quantum systems, see \cite{H74}, \cite{GV80}, \cite{Sp80}, \cite{FL04},\cite{FTY2000}; in the Chandrasekhar theory of stellar collapse \cite{LY87}; as an electrostatic version of the Maxwell-Schr\"odinger system \cite{CG04}, \cite{Lieb03}, in Bose-Einstein condensates of a gas of bosonic particles with long-range dipole-dipole interactions, see \cite{LMSLP09}, \cite{L-02}, \cite{L-10} and in various other phenomena.

The equation \eqref{gH} enjoys several invariances, among them the scaling invariance: if $u(x,t)$ solves \eqref{gH}, then so does
\begin{equation}\label{E:scale}
u_{\lambda}(x,t) = \lambda^{\frac{N-b+2}{2(p-1)}}u(\lambda x,\lambda^2 t). 
\end{equation}
This implies that $\dot{H}^{s_c}$ norm is invariant under the above scaling provided the critical scaling index $s_c$ is
\begin{equation}\label{E:scaling}
\displaystyle{s_c=\frac{N}{2}-\frac{N-b+2}{2(p-1)}}.
\end{equation}
The equation \eqref{gH} is called $\dot{H}^{s_c}$-critical if for given $b,N,p$ in \eqref{gH} the $\dot{H}^{s_c}$ norm is scale-invariant with scaling \eqref{E:scale} and $s_c$ defined by \eqref{E:scaling}.

During their lifespan, solutions to \eqref{gH} satisfy mass conservation 
\begin{equation}\label{mass}
	M[u(t)]\defeq\int_{\R^N}^{}|u|^2\,dx = M[u_0],
\end{equation}
energy conservation
\begin{equation}\label{energy}
	E[u(t)] \defeq \frac{1}{2}\int_{\R^N}^{}|\nabla u|^2- \frac{1}{2p}Z(u)  = E[u_0],
\end{equation}
where $Z(u) \defeq \int_{\R^N}(|x|^{-b}\ast|u|^p)|u|^{p}\,dx$, and momentum conservation  
\begin{equation}\label{moment}
P[u(t)]\defeq\Im\left(\int_{\R^N}^{}\bar{u}\,\nabla u\,dx\right) = P[u_0].
\end{equation}

In this paper we are interested in understanding the long-term behavior of solutions, either global in time or finite time existence, for a variety of criticality cases ($s_c\geq 0$) of the gHartree equation \eqref{gH}. The local well-posedness is the starting point, and in this note we obtain the local well-posedness at the critical regularity  $\dot{H}^{s_c}$, $s_c \geq 0$, which is not necessarily conserved (or even bounded in the focusing case). The local existence is then extended to the global existence for small $\dot{H}^{s_c}$ data. On the other hand, we show that large data may blow-up in finite time. For that we give a sufficient condition for blow-up and show examples of Gaussian data with thresholds in various (energy-subcritical, critical and supercritical) cases. Such examples are important for studying the actual dynamics of finite time blow-up. For example, in \cite{YRZ3} the dynamics of stable blow-up is investigated (including rates and profiles) for the generalized Hartree in the mass-critical and supercritical regimes, and is compared with known blow-up dynamics of the (local) nonlinear Schr\"odinger equation.

In \cite{AKAR1} we showed that the Cauchy problem for the equation \eqref{gH} with the initial data $u(x,0)=u_0(x)$ is locally well-posed in $H^1$ provided $s_c<1$ (note that the nonlinearity for the $H^1$ local well-posedness in \cite{AKAR1} is always $H^1$-subcritical). 
Our first result in this paper addresses local in time solutions at the $\dot{H}^{s_c}$ regularity. 
\begin{theorem}\label{lwp}
Let $0<b<N$, $N \geq 1$ and $p\geq 2$ so that $s_c\geq 0$.  
Assume in addition that if $p$ is not an even integer, then $s_c< p-1$. 
Let $u_0\in \dot{H}^{s_c}(\R^N)$.  
Then there exists a unique solution $u(x,t)$ of the equation \eqref{gH} with data $u_0$ defined on $[0,T]$ for some $T>0$, and such that 
\begin{align*}
u\in C([0,T];\dot{H}^{s_c}(\R^N))\cap L^{q}([0,T]; \dot{W}^{s_c,r}(\R^N)),	
\end{align*}
where the pair $(q,r)$ is the following $L^2$-admissible pair 
\begin{equation}\label{pair}
(q,r) = \left(2p,\frac{2Np}{Np-2}\right).
\end{equation}
Moreover, for all $0<\widetilde{T}<T$ there exists a neighborhood $U$ of $u_0$ in $\dot{H}^{s_c}(\R^N)$ such that the map 
$$
U\rightarrow C([0,T];\dot{H}^{s_c}(\R^N))\cap L^{q}([0,T]; \dot{W}^{s_c,r}(\R^N)),\quad \tilde{u}_0\mapsto\tilde{u}(t),
$$  
is Lipschitz.
\end{theorem}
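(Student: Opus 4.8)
The plan is to solve the Duhamel (integral) formulation
\begin{equation*}
\Phi(u)(t) = e^{it\Delta}u_0 + i\int_0^t e^{i(t-s)\Delta}\,N(u)(s)\,ds, \qquad N(u)\defeq\left(|x|^{-b}\ast|u|^p\right)|u|^{p-2}u,
\end{equation*}
by a contraction (Banach fixed-point) argument in the space $X_T \defeq C([0,T];\dot H^{s_c}(\R^N))\cap L^q([0,T];\dot W^{s_c,r}(\R^N))$ with $(q,r)$ the $L^2$-admissible pair \eqref{pair}. I would work on a closed ball $B=\{u\in X_T:\|u\|_{X_T}\le 2C_0\,a\}$, where $a\defeq\|e^{it\Delta}u_0\|_{X_T}$ and $C_0$ is the Strichartz constant, and run the contraction in the weaker metric $d(u,v)=\|u-v\|_{L^q([0,T];L^r)}$ (no derivatives), in which $B$ is complete; this avoids differentiating the nonlinearity when estimating differences. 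The $C([0,T];\dot H^{s_c})$ control comes along automatically from the $(\infty,2)$ endpoint of the Strichartz inequality.

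Since the equation is $\dot H^{s_c}$-critical, time does not by itself furnish smallness in the nonlinear term. Instead I would exploit that, for fixed $u_0\in\dot H^{s_c}$, one has $a=\|e^{it\Delta}u_0\|_{X_T}\to 0$ as $T\to0^+$ (approximate $u_0$ by Schwartz data and use dominated convergence in the Strichartz norm). This smallness of $a$, rather than a positive power of $T$, is what closes the fixed-point argument; the same estimate run on $T=\infty$ yields the global small-data theory alluded to afterward.

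The crux is the nonlinear estimate $\||\nabla|^{s_c} N(u)\|_{L^{q'}([0,T];L^{r'})} \lesssim \|u\|_{X_T}^{2p-1}$. To prove it I would first peel off the convolution by the Hardy--Littlewood--Sobolev inequality (valid for $0<b<N$), reducing matters to products of $|u|^p$ and $|u|^{p-2}u$ in suitable Lebesgue spaces; then distribute the $s_c$ fractional derivatives with the fractional Leibniz (Kato--Ponce) rule; and finally apply the fractional chain rule to $|\nabla|^{s_c}(|u|^p)$ and $|\nabla|^{s_c}(|u|^{p-2}u)$. A Sobolev embedding $\dot W^{s_c,r}\hookrightarrow L^{\rho}$ converts the remaining low-derivative factors into powers of $\|u\|_{X_T}$, and the admissibility relation $\tfrac2q+\tfrac Nr=\tfrac N2$ together with the scaling identity \eqref{E:scaling} is precisely what balances the time- and space-exponents. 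The main obstacle is the fractional chain rule: when $p$ is an even integer the nonlinearity is a genuine polynomial in $u,\bar u$ and one differentiates termwise, but when $p$ is not an even integer the map $z\mapsto|z|^{p-2}z$ has only H\"older-type regularity of order $p-1$, so the chain rule demands $s_c<p-1$ --- exactly the hypothesis imposed in that case.

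With this estimate in hand, $\Phi$ maps $B$ into itself once $a$ is small. For the contraction I would write $N(u)-N(v)$ as a telescoping sum through $(|x|^{-b}\ast(|u|^p-|v|^p))|u|^{p-2}u$ and $(|x|^{-b}\ast|v|^p)(|u|^{p-2}u-|v|^{p-2}v)$, use the pointwise bounds $\big||u|^p-|v|^p\big|\lesssim(|u|+|v|)^{p-1}|u-v|$ and $\big||u|^{p-2}u-|v|^{p-2}v\big|\lesssim(|u|+|v|)^{p-2}|u-v|$, and apply HLS and H\"older to get $d(\Phi(u),\Phi(v))\lesssim(\|u\|_{X_T}+\|v\|_{X_T})^{2p-2}\,d(u,v)\lesssim a^{2p-2}\,d(u,v)$. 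Shrinking $T$ (hence $a$) forces the constant below $1$, so Banach's fixed-point theorem produces the unique solution in $B$, while the endpoint Strichartz bound places it in $C([0,T];\dot H^{s_c})$. Uniqueness in the full Strichartz class and the asserted Lipschitz dependence $\tilde u_0\mapsto\tilde u$ follow from the very same difference estimate, absorbing the nonlinear contribution into the left-hand side after the analogous smallness reduction.
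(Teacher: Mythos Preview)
Your overall strategy---contraction on a ball in the critical Strichartz space using Strichartz estimates, Hardy--Littlewood--Sobolev, fractional Leibniz/chain rules, and Sobolev embedding to obtain $\||\nabla|^{s_c}N(u)\|_{L^{q'}L^{r'}}\lesssim\|u\|_{X_T}^{2p-1}$---is exactly what the paper does, and your observation that smallness must come from $\|e^{it\Delta}u_0\|_{X_T}\to 0$ as $T\to 0^+$ (rather than a power of $T$) is the correct critical-regularity mechanism.

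The one genuine difference is the metric for the contraction. You run the fixed point in the weaker metric $d(u,v)=\|u-v\|_{L^q_TL^r_x}$ with no derivatives, relying on the standard weak-$*$ compactness argument to make the ball complete; this spares you from applying $|\nabla|^{s_c}$ to the difference $N(u)-N(v)$. The paper instead contracts in the strong metric $\||\nabla|^{s_c}(\cdot)\|_{L^q_TL^r_x}$, which forces it to estimate $|\nabla|^{s_c}\bigl[(|x|^{-b}\ast|u|^p)(|u|^{p-2}u-|v|^{p-2}v)\bigr]$ and its companion term via the fractional product and chain rules. Your route is lighter for existence and uniqueness; the paper's route is heavier there but pays an immediate dividend at the end.

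That dividend is precisely where your proposal has a small gap: the theorem asserts Lipschitz dependence of $\tilde u_0\mapsto\tilde u$ into $C([0,T];\dot H^{s_c})\cap L^q([0,T];\dot W^{s_c,r})$, i.e.\ in the \emph{strong} topology. Your ``very same difference estimate'' is in the undifferentiated $L^q_TL^r_x$ metric, so it only yields Lipschitz continuity there. To upgrade to Lipschitz in $\dot W^{s_c,r}$ (and hence in $\dot H^{s_c}$ via the $(\infty,2)$ endpoint) you must after all estimate $|\nabla|^{s_c}(N(u)-N(v))$, which is exactly the computation the paper carries out in its equations for $D_1$ and $D_2$. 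So either switch to the strong metric from the start, or add that separate estimate at the end for the Lipschitz claim.
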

Note that the above theorem holds regardless of the focusing or defocusing cases; as a consequence, the same result holds in the inhomogeneous space $H^{s_c}$, see Theorem \ref{Lwp}.

We then ask if it is possible to extend the local existence to the larger time intervals, and one of the consequences is the small data theory, which is our next result. 

\begin{theorem}\label{sd-gext}
Let $0<b<N$, $N \geq 1$ and $p\geq 2$ so that $s_c\geq 0$. 
Assume in addition that if $p$ is not an even integer, then $s_c< p-1$. 
Let $u_0\in \dot{H}^{s_c}(\R^N)$ with $\|u_0\|_{\dot{H}^{s_c}} \leq A$. 
Then there exists $\delta = \delta(A)>0$ such that if $\|e^{it\Delta}u_0\|_{S(\dot{H}^{s_c})} \leq \delta$, then there exists a unique global solution $u$ of \eqref{gH} in $\dot{H}^{s_c}(\R^N)$ such that
	\begin{equation}\label{base-level}
	\|u\|_{S(\dot{H}^{s_c})} \leq 2\|e^{it\Delta}u_0\|_{S(\dot{H}^{s_c})},
	\end{equation}
	and
	\begin{equation}\label{E:Hs-level}
	\| |\nabla|^{s_c}u\|_{S(L^2)} \leq 2\,c_1\, \|u_0\|_{\dot{H}^{s_c}}.
	\end{equation}
\end{theorem}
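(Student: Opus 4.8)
The plan is to solve the Duhamel formulation
\begin{equation*}
\Phi(u)(t)\defeq e^{it\Delta}u_0 + i\int_0^t e^{i(t-s)\Delta}\Big(|x|^{-b}\ast|u|^p\Big)|u|^{p-2}u\,(s)\,ds
\end{equation*}
by a contraction argument, but now on the whole line $t\in\R$ instead of a short interval $[0,T]$. The essential observation is that the nonlinear estimates used to prove Theorem \ref{lwp} are scaling critical and hence hold globally in time: there the finiteness of $T$ only served to make the data contribution small, whereas here the needed smallness is provided directly by the hypothesis $\|e^{it\Delta}u_0\|_{S(\dot H^{s_c})}\le\delta$. I would therefore set up the fixed point on the complete metric space
\begin{equation*}
B=\Big\{u:\ \|u\|_{S(\dot H^{s_c})}\le 2\|e^{it\Delta}u_0\|_{S(\dot H^{s_c})},\quad \||\nabla|^{s_c}u\|_{S(L^2)}\le 2c_1\|u_0\|_{\dot H^{s_c}}\Big\},
\end{equation*}
endowed with the distance induced by $\|\cdot\|_{S(\dot H^{s_c})}$.

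For the self-mapping $\Phi:B\to B$, the free evolution is handled by the global Strichartz estimate, which supplies $\|e^{it\Delta}u_0\|_{S(\dot H^{s_c})}=\delta$ and $\||\nabla|^{s_c}e^{it\Delta}u_0\|_{S(L^2)}\le c_1\|u_0\|_{\dot H^{s_c}}$. For the integral term I would invoke the nonlinear estimates from the proof of Theorem \ref{lwp}, applied on $\R$, in the coupled form
\begin{align*}
\|\Phi(u)-e^{it\Delta}u_0\|_{S(\dot H^{s_c})}&\le C\,\|u\|_{S(\dot H^{s_c})}^{2p-2}\,\||\nabla|^{s_c}u\|_{S(L^2)},\\
\big\||\nabla|^{s_c}\big(\Phi(u)-e^{it\Delta}u_0\big)\big\|_{S(L^2)}&\le C\,\|u\|_{S(\dot H^{s_c})}^{2p-2}\,\||\nabla|^{s_c}u\|_{S(L^2)},
\end{align*}
in which the convolution is absorbed by the Hardy--Littlewood--Sobolev inequality and the $s_c$ derivatives are distributed by the fractional product and chain rules (the restriction $s_c<p-1$ for non-even $p$ being exactly what provides the smoothness of $|u|^{p-2}u$ and $|u|^p$ these rules require). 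Inserting the defining bounds of $B$ together with $\|u_0\|_{\dot H^{s_c}}\le A$, the first line gives $\|\Phi(u)\|_{S(\dot H^{s_c})}\le\delta+C'A\,\delta^{2p-2}$, which is $\le 2\delta$ once $C'A\,\delta^{2p-3}\le 1$, while the second gives $\||\nabla|^{s_c}\Phi(u)\|_{S(L^2)}\le c_1\|u_0\|_{\dot H^{s_c}}\big(1+2C(2\delta)^{2p-2}\big)$, which is $\le 2c_1\|u_0\|_{\dot H^{s_c}}$ once $C(2\delta)^{2p-2}\le\tfrac12$. Both requirements are met by choosing $\delta=\delta(A)>0$ small, and it is precisely the coupling of the critical norm to the $L^2$-level norm in the first estimate (so that the factor multiplying $\delta^{2p-2}$ carries an $A$, while $2p-3\ge 1$ for $p\ge 2$) that forces $\delta$ to depend on $A$.

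To finish I would verify that $\Phi$ is a contraction in the $\|\cdot\|_{S(\dot H^{s_c})}$ metric. Using the difference version of the estimates above, on $B$ one obtains a Lipschitz bound $\|\Phi(u)-\Phi(v)\|_{S(\dot H^{s_c})}\le\kappa\,\|u-v\|_{S(\dot H^{s_c})}$ whose constant $\kappa$ is a positive power of $\delta$ times a quantity bounded in terms of $A$; shrinking $\delta=\delta(A)$ once more makes $\kappa<1$. The contraction mapping principle then yields a unique fixed point $u\in B$, which is the desired global solution, and the two conclusions \eqref{base-level} and \eqref{E:Hs-level} are read off directly from membership in $B$. Since the nonlinear estimates are inherited from Theorem \ref{lwp} and are insensitive to the length of the time interval, I do not expect a genuinely new analytic obstacle here; the main point to watch is the bookkeeping of the $A$-dependence in the self-mapping step, confirming that replacing the short-time smallness of Theorem \ref{lwp} by the smallness of $\|e^{it\Delta}u_0\|_{S(\dot H^{s_c})}$ still closes the iteration globally.
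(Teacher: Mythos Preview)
Your proposal is essentially the paper's own argument: same ball $B$, same Duhamel map $\Phi$, same coupled nonlinear estimate
\[
\big\||\nabla|^{s_c}N(u)\big\|_{S'(L^2)}\lesssim \|u\|_{S(\dot H^{s_c})}^{2(p-1)}\,\||\nabla|^{s_c}u\|_{S(L^2)}
\]
borrowed from the proof of Theorem~\ref{lwp}, and the same tracking of the $A$-dependence leading to the two smallness conditions on $\delta$.

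The one place you diverge from the paper is the contraction metric. You run the fixed point in $d(u,v)=\|u-v\|_{S(\dot H^{s_c})}$, while the paper uses $d(u,v)=\||\nabla|^{s_c}(u-v)\|_{S(L^2)}$. The paper's choice is the natural one here, because the difference estimates \eqref{lwp13}--\eqref{lwp14} that you invoke from Theorem~\ref{lwp} put $\||\nabla|^{s_c}(u-v)\|_{L^q_tL^r_x}$ on the right-hand side; with the paper's metric the contraction then closes immediately. With your weaker $S(\dot H^{s_c})$ metric that right-hand side carries a derivative on $u-v$ that your distance does not control, so the loop does not close from those estimates as written; you would also owe a separate completeness argument for $B$ in the weaker topology. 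Replacing your metric by $\||\nabla|^{s_c}(u-v)\|_{S(L^2)}$ fixes both points with no additional work and brings your proof in line with the paper's.
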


As the small data global existence is available, one may ask if the global existence can be extended for large solutions, or if there is a threshold for global existence. In \cite{AKAR1} we showed a dichotomy for scattering vs. finite time blow-up solutions provided the initial data is in $H^1$; the threshold was given by a combination of the mass-energy and the gradient comparison to that of the ground state (see also Section 5, Theorem \ref{dichotomy}). For the $\dot{H}^s$ data, it is a more difficult question as the conserved quantities at the $\dot{H}^s$ level are not available (unless $s=0$ or $s=1$). Nevertheless, one can still ask for a criteria for finite time blow-up, which we investigate next. Note that if initial data is in $\dot{H}^{s_c} \cap {H}^1$, then it stays in that regularity as the consequence of conservation laws. 
We give a sufficient condition for finite-time blow-up in the generalized Hartree equation \eqref{gH}, which follows the ideas in \cite{HPR10, DR15, L-02, L-10} except that now we find a bound for the convolution term.  To state the result we define the variance, $V(t) \defeq \|xu(t)\|_{L^2(\R^N)}^2$.  

\begin{theorem}\label{blowupL}
Let $u_0 \in H^1$ if $s_c\leq 1$ and $u_0 \in H^{s_c}$ if $s_c>1$. Assume also $V(0) < \infty$ and $E[u] > 0$. The following is a sufficient condition for the blow-up in finite time for the solutions to the gHartree equation \eqref{gH} with initial data $u_0$ in the mass-supercritical case ($s_c > 0$): 
\begin{equation}\label{blowupL1}
\frac{\partial_t\,V(0)}{\omega M[u_0]} < 4\sqrt{2} \, f\left(\frac{E[u_0]V(0)}{(\omega M[u_0])^2}\right), 
\end{equation}
where $~\displaystyle \omega^2 = \frac{N^2(N(p-2)+b-2)}{8(N(p-2)+b)}$  
and the function $f$ is defined as (here, $k = s_c(p-1)$)
	\begin{equation}\label{blowupL2}
	f(x) = \begin{cases}
	\sqrt{\frac{1}{kx^k}+x-\frac{1+k}{k}}\quad\text{if}\,\,\,0 < x < 1\\
	-\sqrt{\frac{1}{kx^k}+x-\frac{1+k}{k}}\quad\text{if}\,\,\,x \geq 1.
	\end{cases}
	\end{equation}
\end{theorem}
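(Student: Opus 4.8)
The plan is a virial/variance argument: reduce the PDE problem to an autonomous differential inequality for $V(t)=\|xu(t)\|_{L^2}^2$, identify the separatrix of the associated ODE, and show it is exactly the threshold curve encoded by $f$. First I would compute the two time derivatives of the variance. Differentiating and using \eqref{gH}, the first derivative is $V'(t)=4\,\Im\int_{\R^N}\bar u\,(x\cdot\nabla u)\,dx$. For the second derivative the Laplacian contributes the usual $8\|\nabla u\|_{L^2}^2$, and the only new point---the one the authors flag---is the nonlocal term: testing the dilation generator $\tfrac N2 u+x\cdot\nabla u$ against the nonlinearity and exploiting the homogeneity of $|x|^{-b}$ (so that $Z(\lambda^{N/2}u(\lambda\,\cdot))=\lambda^{Np-2N+b}Z(u)$) yields the Pohozaev-type identity $\Re\int \overline{(|x|^{-b}*|u|^p)|u|^{p-2}u}\,(x\cdot\nabla u)\,dx=\tfrac{b-2N}{2p}Z(u)$. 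Eliminating $Z(u)$ through the energy \eqref{energy}, $Z(u)=p\|\nabla u\|_{L^2}^2-2pE[u]$, collapses everything to the clean identity
\[
V''(t)=16(k+1)\,E[u]-8k\,\|\nabla u(t)\|_{L^2}^2,\qquad k=s_c(p-1),
\]
which I would sanity-check against the mass-critical case $k=0$, where it becomes $V''=16E[u]$. At non-even $p$ and at the regularity of Theorem~\ref{lwp} this identity must be justified by approximation together with a Hardy--Littlewood--Sobolev bound controlling the convolution term $Z(u)$.

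Next I would close the inequality in $V$ alone. Splitting $\int\bar u\,(x\cdot\nabla u)$ into real and imaginary parts gives $\Re\int\bar u(x\cdot\nabla u)=-\tfrac N2 M[u_0]$ and $\Im\int\bar u(x\cdot\nabla u)=\tfrac14 V'$, so Cauchy--Schwarz produces the sharpened uncertainty bound $V\,\|\nabla u\|_{L^2}^2\ge \tfrac{N^2}{4}M[u_0]^2+\tfrac1{16}(V')^2$. Inserting this into the virial identity, and using the stated value $\omega^2=\tfrac{N^2k}{8(k+1)}$ (so that $2kN^2M[u_0]^2=16(k+1)\,\omega^2 M[u_0]^2$), gives the autonomous differential inequality
\[
V''(t)\le 16(k+1)\Big(E[u_0]-\frac{\omega^2 M[u_0]^2}{V}\Big)-\frac{k}{2}\,\frac{(V')^2}{V}.
\]
Treating $Q=(V')^2$ as a function of $V$ turns the equality case into the linear ODE $\tfrac{dQ}{dV}+\tfrac kV Q=32(k+1)E[u_0]-\tfrac{4kN^2M[u_0]^2}{V}$, whose integration (factor $V^k$) gives the first integral $(V')^2=32E[u_0]V-4N^2M[u_0]^2+CV^{-k}$; equivalently the quantity $\mathcal G:=V^k(V')^2-32E[u_0]V^{k+1}+4N^2M[u_0]^2V^k$ satisfies $\mathcal G'=kV^{k-1}V'\big[(V')^2+4N^2M[u_0]^2-16V\|\nabla u\|_{L^2}^2\big]$ with nonpositive bracket, hence is nonincreasing while $V'>0$ and nondecreasing while $V'<0$.

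Finally I would read off the threshold. The ODE has a single equilibrium $V_*=\omega^2M[u_0]^2/E[u_0]$ (where $V'=0$ and $\|\nabla u\|_{L^2}^2=\tfrac{2(k+1)}{k}E[u_0]$, so $V''=0$), and the separatrix through it, after rescaling by $x=E[u_0]V/(\omega M[u_0])^2=V/V_*$, becomes $(V'/(\omega M[u_0]))^2=32\big(x-\tfrac{1+k}{k}+\tfrac1{kx^k}\big)=32\,f(x)^2$, i.e. $V'/(\omega M[u_0])=\pm4\sqrt2\,f(x)$; the sign convention in \eqref{blowupL2} picks the upper branch for $x<1$ and the lower branch for $x\ge1$, which is exactly the threshold curve in \eqref{blowupL1}. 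Data satisfying \eqref{blowupL1} then lie strictly below this separatrix, i.e. in the basin on which the comparison dynamics drive $V$ to $0$ in finite time; since $V''$ stays negative there, $V$ must vanish in finite time, forcing $\|\nabla u(t)\|_{L^2}\to\infty$ and hence finite-time blow-up. I expect the main difficulties to be exactly these two points: (i) the rigorous nonlocal virial identity at the stated regularity, which is the convolution bound the authors single out; and (ii) because the reduced inequality has a $V'$-dependent right-hand side, a naive second-order comparison principle fails, so confining the trajectory below the separatrix must be carried out through the monotone quantity $\mathcal G$ and a case split on the sign of $V'(0)$ that mirrors the two cases defining $f$.
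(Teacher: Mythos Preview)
Your proposal is correct and follows the same overall strategy as the paper: compute the virial identity, insert the sharpened uncertainty bound $V\|\nabla u\|^2\ge \tfrac{N^2}{4}M^2+\tfrac1{16}(V')^2$, and then do a phase-plane analysis of the resulting autonomous differential inequality to locate the separatrix, which is exactly the curve $V'/(\omega M)=4\sqrt2\,f(V/V_*)$.

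The one technical difference worth knowing is how the phase-plane step is organized. You keep the $(V')^2/V$ term and integrate directly, producing the first integral $\mathcal G=V^k(V')^2-32EV^{k+1}+4N^2M^2V^k$ whose monotonicity flips with the sign of $V'$; this forces the case split you flag at the end. The paper instead removes the $(V')^2/V$ term up front by the substitution $B=V^{\alpha+1}$ with $\alpha=k/2$, which collapses the inequality to the clean second-order form $B_{tt}\le c_1 B^{\alpha/(\alpha+1)}-c_2 B^{(\alpha-1)/(\alpha+1)}$ with no $B_t$ on the right. After rescaling this is $v_{ss}\le -c\,\partial_v\widetilde U(v)$, i.e.\ a unit-mass particle in the potential $\widetilde U(v)=\tfrac{\alpha+1}{2\alpha}v^{2\alpha/(\alpha+1)}-\tfrac{\alpha+1}{2\alpha+1}v^{(2\alpha+1)/(\alpha+1)}$ subject to an additional force $-g^2(s)\le 0$. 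The conserved energy $\mathcal E=\tfrac1{2c}v_s^2+\widetilde U(v)$ of the force-free problem plays exactly the role of your $\mathcal G$ (they are the same object after the change of variables), but the mechanical picture makes the comparison argument essentially one line: a particle that would reach the origin without the extra attractive force certainly does so with it. The three cases $(\mathcal E(0)<\widetilde U_{\max},\,v(0)<1)$, $(\mathcal E(0)>\widetilde U_{\max},\,v_s(0)<0)$, $(\mathcal E(0)=\widetilde U_{\max},\,v_s(0)<0,\,v(0)<1)$ then translate back to the single inequality \eqref{blowupL1} via the same algebra you carried out. Your route is equally valid; the substitution just buys a tidier endgame.
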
   

Lastly, we use examples of Gaussian initial data to show known thresholds for global vs. finite existence and  scattering in the following cases: energy-subcritical (see Figure \ref{F:1}), energy-critical (see Figure \ref{F:2}) and energy-supercritical case (Figure \ref{F:3}). 

The paper is organized as follows: in the next section we set the notation and review basic tools, in Section \ref{LWP} we give local well-posedness and the small data theory, in Section \ref{main1} we discuss the sufficient condition for blow-up, and finally, in Section \ref{eg}, we recall dichotomy results and give examples of various thresholds for Gaussian initial data in energy-subcrtical, critical and super-critical cases. 

\textbf{Acknowledgements.} S.R. was partially supported by the NSF CAREER grant DMS-1151618 and also by NSF grant DMS-1815873.  A.K.A.'s graduate research support was in part funded by the grants DMS-1151618 and DMS-1815873 (PI: Roudenko).

\section{Notation and Basic Estimates}

We start with recalling the Fourier transform on $\R^N$ and our convention on normalization $
\hat{f}(\xi) = \frac{1}{(2\pi)^{N/2}}\int_{\R^N}e^{-ix\xi}f(x)\,dx.
$
The homogeneous Sobolev $\dot{H}^{s}$ space is equipped with the norm 
$\|u\|_{\dot{H}^s} = \| |\nabla|{^s} u \|_{L^2(\mathbb{R}^N)}$, where the  operator $|\nabla|^s$ is defined as $\widehat{|\nabla|^{s}f} (\xi) = |\xi|^s \hat{f}(\xi)$. For $f(z)=|z|^{p-2}z$ we note (e.g., refer to \cite{AKAR1}) that
\begin{equation}\label{sdc1}
|f(z_1)-f(z_2)|\lesssim (|z_1|^{p-2}+|z_2|^{p-2})|z_1-z_2| \quad \mbox{for} \quad p \geq 2.
\end{equation}
Also, 
\begin{equation}\label{sdc5}
|f(z_1)\bar{z}_1 - f(z_2)\bar{z}_2| \equiv	| |z_1|^p-|z_2|^p|\lesssim (|z_1|^{p-1}+|z_2|^{p-1})|z_1-z_2| \quad \mbox{for }\quad p \geq 1.
\end{equation}

\begin{definition}
\label{admissible}
The pair $(q,r)$ is called admissible if 
\begin{equation*}
\frac{2}{q} + \frac{N}{r} = \frac{N}{2}\quad \text{and}\quad 2\leq q,r\leq\infty\,\,\,\text{provided}\,\,\,(q,r,N)\neq (2,\infty,2).	
\end{equation*}
\end{definition} 
	
Next, recall the well-known Strichartz estimates (see \cite{KT98}, \cite{C03}, \cite{F05}).
\begin{lemma}\label{strichartz}
Let $s\geq 0$, $(q,r)$ be admissible, $I$ be a compact interval of $\R$ and $u$ be a solution to $iu_t + \Delta u = F(u)$. Then, for any $t_0\in I$
	$$
	\||\nabla|^su\|_{L_t^qL_x^r(I\times\R^N)}\lesssim \||\nabla|^su(t_0)\|_{L^2_x(\R^N)} + \||\nabla|^sF(u)\|_{L^{q^{\prime}}_tL^{r^{\prime}}_x(I\times\R^N)}.
	$$ 
\end{lemma}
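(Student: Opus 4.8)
The plan is to reduce the estimate to the $s=0$ case and then establish it by the classical duality / $TT^*$ method together with the dispersive decay of the free propagator. Since $|\nabla|^s$ is a Fourier multiplier it commutes with the free Schr\"odinger group $e^{it\Delta}$, so applying $|\nabla|^s$ to the Duhamel representation
$$
u(t) = e^{i(t-t_0)\Delta}u(t_0) - i\int_{t_0}^t e^{i(t-\tau)\Delta}F(u(\tau))\,d\tau
$$
turns the claim into the two building-block inequalities
$$
\|e^{it\Delta}g\|_{L^q_tL^r_x}\lesssim\|g\|_{L^2_x},
\qquad
\Big\|\int_{t_0}^t e^{i(t-\tau)\Delta}G(\tau)\,d\tau\Big\|_{L^q_tL^r_x}\lesssim\|G\|_{L^{q'}_tL^{r'}_x},
$$
applied to $g=|\nabla|^su(t_0)$ and $G=|\nabla|^sF(u)$. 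Thus it suffices to prove the homogeneous and the retarded (inhomogeneous) Strichartz estimates at regularity zero for every admissible pair $(q,r)$, with constants independent of $I$ and $t_0$.

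First I would record the two analytic inputs for the free evolution: mass conservation $\|e^{it\Delta}g\|_{L^2_x}=\|g\|_{L^2_x}$ and the dispersive decay $\|e^{it\Delta}g\|_{L^\infty_x}\lesssim |t|^{-N/2}\|g\|_{L^1_x}$, which follows from the explicit convolution kernel $(4\pi it)^{-N/2}e^{i|x|^2/4t}$. Interpolating these by Riesz--Thorin gives $\|e^{it\Delta}g\|_{L^r_x}\lesssim |t|^{-N(1/2-1/r)}\|g\|_{L^{r'}_x}$ for $2\le r\le\infty$. For the homogeneous estimate I would run the standard $TT^*$ argument: writing $Tg=e^{it\Delta}g$, the bound $T:L^2_x\to L^q_tL^r_x$ is equivalent to $TT^*:L^{q'}_tL^{r'}_x\to L^q_tL^r_x$, and $TT^*G=\int_{\R} e^{i(t-\tau)\Delta}G(\tau)\,d\tau$. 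Inserting the dispersive bound inside and then applying the Hardy--Littlewood--Sobolev inequality in the time variable closes the non-endpoint estimate --- the admissibility relation $\tfrac2q+\tfrac Nr=\tfrac N2$ is exactly what makes the kernel $|t-\tau|^{-N(1/2-1/r)}$ a fractional-integration kernel with the matching exponent. This simultaneously yields the untruncated inhomogeneous bound, with $\int_{\R}$ in place of $\int_{t_0}^t$.

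It remains to replace the full time integral by the retarded one $\int_{t_0}^t$; I would invoke the Christ--Kiselev lemma, which upgrades the untruncated inhomogeneous bound to the truncated one precisely when $q>q'$, i.e. $q>2$, at the cost only of the implicit constant. The main obstacle is the endpoint $q=2$ (admissible in dimension $N\ge 3$ with $r=\tfrac{2N}{N-2}$), where Christ--Kiselev fails and Riesz--Thorin interpolation is too crude; here one must follow the Keel--Tao argument, decomposing the retarded kernel dyadically in $|t-\tau|$ and summing the resulting bilinear pieces via their abstract bilinear interpolation lemma. Since the admissibility hypothesis explicitly excludes only the genuinely false case $(q,r,N)=(2,\infty,2)$, all remaining admissible pairs are covered by combining the non-endpoint argument with the Keel--Tao endpoint; undoing the reduction $g=|\nabla|^su(t_0)$, $G=|\nabla|^sF(u)$ then returns the stated inequality.
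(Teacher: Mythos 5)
Your proposal is correct and is exactly the standard argument behind this lemma, which the paper does not prove itself but cites from Keel--Tao \cite{KT98}, Cazenave \cite{C03}, and Foschi \cite{F05}: commuting $|\nabla|^s$ with the propagator to reduce to $s=0$, the $TT^*$ scheme combining the dispersive estimate with Hardy--Littlewood--Sobolev in time for non-endpoint pairs, Christ--Kiselev for the retarded estimates when $q>2$, and the Keel--Tao bilinear interpolation at the endpoint $q=2$, $N\geq 3$ (the trivial pair $(q,r)=(\infty,2)$ following directly from unitarity and Minkowski). Nothing essential is missing, so no further comparison is needed.
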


For convenience we also define 
$$
\|u\|_{S(L^2)} \defeq \sup_{(q,r)\in \mathcal{A}}\|u\|_{L_t^qL_x^r}
\quad \mbox{and} \quad \|u\|_{S^{\prime}(L^2)} \defeq \inf_{(q,r)\in \mathcal{A}} 
\|u\|_{L_t^{q^{\prime}}L_x^{r^{\prime}}}
$$
with  $\frac{1}{q} + \frac{1}{q^{\prime}} = 1$, $\frac{1}{r} + \frac{1}{r^{\prime}} = 1$, and 
$\mathcal{A}$ to be the set of all admissible pairs in dimension $N=1$ and $N \geq 3$, and in dimension $N=2$ all admissible pairs with $r \leq M$ for some large $M<\infty$ (to have finite $\sup$).
\smallskip

Next, we recall some fractional calculus results used in local well-posedness.
\begin{lemma}[Proposition 3.1 in \cite{CW91}]\label{frchain}
	Suppose $G\in C^1(\C)$ and $s\in(0,1]$. Let $1< q,q_1,q_2<\infty$ are such that $\frac{1}{q}=\frac{1}{q_1}+\frac{1}{q_2}$. Then,
	$$
	\||\nabla|^sG(u)\|_{L_x^q(\R^N)}\lesssim \|G^{\prime}(u)\|_{L_x^{q_1}(\R^N)}\||\nabla|^su\|_{L_x^{q_2}(\R^N)}.
	$$
\end{lemma}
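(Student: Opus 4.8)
The plan is to reduce to the genuinely fractional range and then exploit a finite-difference characterization of the $\dot W^{s,q}$ norm on which the composition $G\circ u$ acts pointwise. When $s=1$ there is nothing fractional: the ordinary chain rule gives $|\nabla|(G\circ u)=|G'(u)|\,|\nabla u|$, and Hölder with $\tfrac1q=\tfrac1{q_1}+\tfrac1{q_2}$ closes the estimate immediately. So I would assume $0<s<1$, and for this range together with $1<q<\infty$ invoke the square-function (Gagliardo/Stein) characterization
$$
\||\nabla|^s f\|_{L^q(\R^N)} \approx \left\| \left( \int_{\R^N} \frac{|f(x)-f(x-y)|^2}{|y|^{N+2s}}\,dy \right)^{1/2} \right\|_{L^q_x(\R^N)} =: \|\mathcal{D}_s f\|_{L^q} .
$$
The equivalence (Stein's $g$-function theory, equivalently the identification $\dot W^{s,q}=\dot F^{s}_{q,2}$) I would cite as a black box; it is exactly here that $0<s<1$ is used, since a single first-order difference only detects regularity below order one.

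With $g=G\circ u$, the merit of this characterization is that $G$ now sits inside a finite difference. Because $G\in C^1$, the mean value theorem yields the pointwise bound
$$
|G(u(x))-G(u(x-y))| \le |u(x)-u(x-y)| \int_0^1 \big|G'\big(u(x)+\theta(u(x-y)-u(x))\big)\big|\,d\theta .
$$
Substituting this into $\mathcal{D}_s(G\circ u)(x)$ and moving the $\theta$-average outside the weighted $L^2_y$ norm by Minkowski's inequality, I am left with $|u(x)-u(x-y)|$ weighted by $|G'|$ evaluated at the intermediate value $u(x)+\theta(u(x-y)-u(x))$. The main obstacle is precisely this intermediate argument: the right-hand side of the lemma only records $G'$ composed with $u$ at the actual point, so I must dominate $|G'|$ at the convex combination by $G'(u)$ at a nearby base point.

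I would resolve this by controlling the intermediate factor through the Hardy–Littlewood maximal function $M(|G'(u)|)(x)$ — for the power-type nonlinearities relevant to \eqref{gH} this is elementary since $|G'(\theta a+(1-\theta)b)|\lesssim |G'(a)|+|G'(b)|$, while in the general $C^1$ case it is the delicate point of the Christ–Weinstein argument and uses the continuity of $G'$ together with the fact that both arguments are effectively averaged over balls of radius $|y|$. The target is the pointwise chain-rule bound
$$
\mathcal{D}_s(G\circ u)(x) \lesssim M\big(|G'(u)|\big)(x)\,\mathcal{D}_s u(x) .
$$
Taking $L^q_x$ norms, applying Hölder with $\tfrac1q=\tfrac1{q_1}+\tfrac1{q_2}$, using boundedness of $M$ on $L^{q_1}$ (valid since $q_1>1$), and finally reconverting $\|\mathcal{D}_s u\|_{L^{q_2}}$ to $\||\nabla|^s u\|_{L^{q_2}}$ via the characterization above gives
$$
\||\nabla|^s(G\circ u)\|_{L^q} \lesssim \|M(|G'(u)|)\|_{L^{q_1}}\,\|\mathcal{D}_s u\|_{L^{q_2}} \lesssim \|G'(u)\|_{L^{q_1}}\,\||\nabla|^s u\|_{L^{q_2}} ,
$$
which is the assertion. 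The two nontrivial ingredients are thus the $q\neq 2$ square-function characterization and the maximal-function domination of the intermediate gradient; the remainder is Hölder together with the Hardy–Littlewood maximal inequality.
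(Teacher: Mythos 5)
Two remarks first on context: the paper itself does not prove this lemma --- it is imported verbatim as Proposition 3.1 of \cite{CW91}, where the argument runs through a Littlewood--Paley telescoping rather than first differences --- and the distinction between those two routes is exactly where your proposal breaks. The pivotal step you assert, the pointwise bound $\mathcal{D}_s(G\circ u)(x)\lesssim M\big(|G'(u)|\big)(x)\,\mathcal{D}_s u(x)$, is false, even for the power-type nonlinearities relevant to \eqref{gH}. The convexity property $|G'(\theta a+(1-\theta)b)|\lesssim |G'(a)|+|G'(b)|$ only reduces the intermediate factor to $|G'(u(x))|+|G'(u(x-y))|$ inside the square function; the first summand is harmless, but the second is a \emph{pointwise} value of $G'(u)$ at distance $|y|$ from $x$, which no maximal function at $x$ (an average) can dominate. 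Concretely, for $G(z)=z^2$ take $u$ vanishing near $x$ with a spike of height $h$ and width $\epsilon$ at unit distance: then $\mathcal{D}_s(G\circ u)(x)\sim h^2\epsilon^{N/2}$, while $M(|G'(u)|)(x)\,\mathcal{D}_s u(x)\sim (h\epsilon^N)(h\epsilon^{N/2})$, so the ratio diverges as $\epsilon\to 0$. Nor can you rescue the term at the level of norms: keeping $|G'(u(x-y))|\,|u(x)-u(x-y)|$ and applying Minkowski in $x$ followed by H\"older for each fixed $y$ produces $\|G'(u)\|_{L^{q_1}}\big(\int \|u(\cdot)-u(\cdot-y)\|_{L^{q_2}}^2\,|y|^{-N-2s}\,dy\big)^{1/2}$, i.e.\ the Besov norm $\|u\|_{\dot B^s_{q_2,2}}$, which is controlled by $\||\nabla|^s u\|_{L^{q_2}}$ only for $q_2\le 2$; for $q_2>2$ it is strictly stronger, so the estimate does not close in the full claimed range.

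There is a second, independent obstruction: the square-function characterization itself is not valid for all $1<q<\infty$. Stein's equivalence requires $q>2N/(N+2s)$ in precisely the direction you need for $u$, namely $\|\mathcal{D}_s u\|_{L^{q_2}}\lesssim \||\nabla|^s u\|_{L^{q_2}}$: for a Schwartz bump $f$ one has $\mathcal{D}_s f(x)\sim |x|^{-(N+2s)/2}$ at infinity while $|\nabla|^s f$ decays like $|x|^{-N-s}$, so the inequality fails for $q_2$ close to $1$ when $s$ is small. The proof in \cite{CW91} avoids both difficulties by localizing in frequency: write $G(u)=\sum_j\big[G(P_{\le j+1}u)-G(P_{\le j}u)\big]$ and apply the fundamental theorem of calculus to express each summand as $P_j u\int_0^1 G'\big(P_{\le j}u+\theta P_j u\big)\,d\theta$. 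The arguments of $G'$ are now genuine averages of $u$ against $L^1$-normalized kernels, so they \emph{can} be dominated by maximal functions (this is where the power-type structure of $G'$ legitimately enters, much as in your aside --- the bare hypothesis $G\in C^1$ is not enough for any known proof), and one concludes with the Fefferman--Stein vector-valued maximal inequality together with the Littlewood--Paley equivalence $\||\nabla|^s f\|_{L^q}\approx \big\|\big(\sum_j 2^{2js}|P_j f|^2\big)^{1/2}\big\|_{L^q}$, which holds for all $1<q<\infty$ with no lower restriction. Your $s=1$ reduction via the chain rule and Riesz transforms, and the H\"older bookkeeping, are fine; the missing idea is frequency localization in place of first differences.
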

\begin{lemma}[Proposition A.1 in \cite{Visan06}]\label{frac}
Let $G$  be a H\"older continuous function of order $0<\rho<1$. Then, for every $0<s<\rho$, $1<q<\infty$, and $s/\rho < \sigma < 1$, we have
$$
\||\nabla|^sG(u)\|_{L^q_x(\R^N)}\lesssim \||u|^{\rho-\frac{s}{\sigma}}\|_{L^{q_1}_x(\R^N)}\||\nabla|^{\sigma}u\|_{L_x^{\frac{s}{\sigma}q_2}}^{\frac{s}{\sigma}}, 
$$ 
provided $\frac{1}{q}= \frac{1}{q_1} + \frac{1}{q_2}$ and $\big(1-\frac{s}{\rho\sigma}\big)q_1 > 1$.
\end{lemma}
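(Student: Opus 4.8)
The plan is to reduce this inhomogeneous-derivative estimate to a Stein--Strichartz square-function characterization of $\||\nabla|^s\cdot\|_{L^q}$ and then exploit the H\"older continuity of $G$ pointwise. For $0<s<1$ and $1<q<\infty$ one has $\||\nabla|^s F\|_{L^q}\approx\|S_s F\|_{L^q}$, where $S_s F(x)=\big(\int_0^\infty\big(\frac{1}{|B(x,t)|}\int_{B(x,t)}|F(y)-F(x)|\,dy\big)^2\,\frac{dt}{t^{1+2s}}\big)^{1/2}$. Applying this to $F=G(u)$ and using $|G(u(y))-G(u(x))|\lesssim|u(y)-u(x)|^\rho$ replaces the oscillation of $G(u)$ by the H\"older-$\rho$ oscillation of $u$, so it suffices to bound $\|S\|_{L^q}$ where $S(x)$ is the above square function built from the ball average of $|u(y)-u(x)|^\rho$.

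The second step is an interpolation in the exponent. Writing $\rho=(\rho-\tfrac s\sigma)+\tfrac s\sigma$ (note $\tfrac s\sigma\in(0,\rho)$ since $\sigma>\tfrac s\rho$) and using $|u(y)-u(x)|\le|u(y)|+|u(x)|$ on the first portion yields the pointwise split $|u(y)-u(x)|^\rho\lesssim(|u(y)|+|u(x)|)^{\rho-\frac s\sigma}|u(y)-u(x)|^{\frac s\sigma}$. Into the ball average I would then apply H\"older in $y$ with the conjugate exponents $a=\frac{\rho}{\rho-s/\sigma}$ and $a'=\frac{\rho}{s/\sigma}$, chosen so that both inner averages become $L^\rho$-averages: the amplitude factor becomes $(\frac{1}{|B|}\int_B(|u(y)|+|u(x)|)^\rho\,dy)^{(\rho-s/\sigma)/\rho}$, controlled by the Hardy--Littlewood maximal function as $[M(|u|^\rho)(x)]^{(\rho-s/\sigma)/\rho}$ (using $|u(x)|^\rho\le M(|u|^\rho)(x)$ a.e.), while the difference factor becomes $(\frac{1}{|B|}\int_B|u(y)-u(x)|^\rho\,dy)^{(s/\sigma)/\rho}$, i.e.\ the $L^\rho$-averaged oscillation of $u$ raised to the power $s/\sigma$.

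Finally I would combine the two factors by H\"older with $\frac1q=\frac1{q_1}+\frac1{q_2}$. The amplitude factor contributes $\|[M(|u|^\rho)]^{(\rho-s/\sigma)/\rho}\|_{L^{q_1}}=\|M(|u|^\rho)\|_{L^{q_1(\rho-s/\sigma)/\rho}}^{(\rho-s/\sigma)/\rho}\lesssim\||u|^{\rho-\frac s\sigma}\|_{L^{q_1}}$, where the maximal inequality applies precisely because $q_1\,\frac{\rho-s/\sigma}{\rho}>1$; since $1-\frac{s}{\rho\sigma}=\frac{\rho-s/\sigma}{\rho}$, this is exactly the stated hypothesis $(1-\frac{s}{\rho\sigma})q_1>1$, which is what renders the exponent $q_1$ admissible. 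The difference factor, measured in $L^{q_2}$, should then reproduce $\||\nabla|^\sigma u\|_{L^{(s/\sigma)q_2}}^{s/\sigma}$ via the $L^\rho$-averaged square-function characterization of $\||\nabla|^\sigma u\|$.

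I expect the main obstacle to be precisely this last matching. The square-function weight $\frac{dt}{t^{1+2s}}$ is scale-invariant, and a naive H\"older in the $t$-variable to pass from the $\rho$-oscillation raised to $s/\sigma$ back to a first-power $\sigma$-square function produces a logarithmically divergent factor $\int_0^\infty\frac{dt}{t}$. Overcoming this forces one to treat the $x$- and $t$-integrations jointly rather than pointwise in $x$: I would invoke the Fefferman--Stein vector-valued maximal inequality to carry the maximal function through the mixed $L^2_t(L^{q}_x)$ norm uniformly across scales, and/or telescope the oscillation across the dyadic balls $B(x,2^{-j}t)$, converting the H\"older-$\rho$ oscillation into a geometrically weighted sum of first-power oscillations whose $\ell^2$/$L^2_t$ structure is scale-respecting. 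Making these exponents close consistently with $\frac1q=\frac1{q_1}+\frac1{q_2}$, while keeping the amplitude factor in $L^{q_1}$ and the difference factor in $L^{(s/\sigma)q_2}$, is where the genuine care lies, and it is also what pins down the sharpness of the condition $(1-\frac{s}{\rho\sigma})q_1>1$.
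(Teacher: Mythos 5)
The first thing to note is that the paper does not prove this lemma at all: it is quoted verbatim as Proposition A.1 of Visan's paper, whose proof is a Littlewood--Paley argument (split $P_N G(u)=P_N G(u_{\le N})+P_N\big[G(u)-G(u_{\le N})\big]$, use H\"older continuity on the second piece, the mean-zero kernel plus the pointwise bound $|u_M|\lesssim M^{-\sigma}M(|\nabla|^{\sigma}u)$ on the first, and sum). Your opening moves are sound and in fact mirror that proof: the split $\rho=(\rho-\tfrac{s}{\sigma})+\tfrac{s}{\sigma}$ is the right one, and your identification of $(1-\tfrac{s}{\rho\sigma})q_1>1$ as exactly the condition making $\|[M(|u|^{\rho})]^{(\rho-s/\sigma)/\rho}\|_{L^{q_1}}\lesssim\||u|^{\rho-s/\sigma}\|_{L^{q_1}}$ legitimate is correct and is precisely how this hypothesis enters the real proof. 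But the argument has a genuine hole at the step you yourself flag, and the fixes you gesture at do not close it. With $\theta=s/\sigma\in(0,1)$, your difference factor requires
$$
\int_0^\infty g(t)^{\theta}\,\frac{dt}{t}\;\lesssim\;\Big(\int_0^\infty g(t)\,\frac{dt}{t}\Big)^{\theta},
\qquad g(t)=t^{-2\sigma}\Big(\tfrac{1}{|B(x,t)|}\int_{B(x,t)}|u(y)-u(x)|^{\rho}\,dy\Big)^{2/\rho},
$$
and this is simply false on the infinite measure $dt/t$ (take $g\simeq 1$ over a long dyadic range). The Fefferman--Stein inequality acts in the $x$-variable and cannot manufacture decay in the scale variable $t$, and telescoping the oscillation over dyadic sub-balls reproduces first-power oscillations without any summable geometric factor linking the outer scale $t^{-1-2s}$ to the inner scales. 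In the dyadic proof this gain is exactly what the frequency localization supplies: the pieces carry factors of the form $(M/N)^{\pm}$ and $M^{-\sigma}$, and the resulting series converge precisely because $s<\rho\sigma$. Your continuous square function has no analogue of this mechanism, which is why the logarithmic divergence appears; ``treat the $x$- and $t$-integrations jointly'' names the difficulty rather than resolving it.

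There is a second unaddressed obstruction in the endgame. Even granting the passage above, you must bound the $\sigma$-oscillation square function of $u$ in $L^{\frac{s}{\sigma}q_2}$ by $\||\nabla|^{\sigma}u\|_{L^{\frac{s}{\sigma}q_2}}$. The hypotheses give only $q_2>q>1$, so the exponent $\tfrac{s}{\sigma}q_2$ can be at or below $1$, where no Lebesgue-space Calder\'on--Zygmund or maximal theory applies; and even in the range slightly above $1$ the classical Stein--Strichartz equivalence you invoke is restricted (the square function of a Schwartz bump decays only like a fixed negative power of $|x|$, so the forward bound fails below an explicit threshold exponent), and repairing it via $L^{\rho}$-averaged oscillation characterizations of Triebel--Lizorkin spaces brings its own index restrictions that your sketch does not verify. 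So the proposal correctly reduces the problem, correctly locates where the hypothesis $(1-\tfrac{s}{\rho\sigma})q_1>1$ is used, and honestly predicts the hard point --- but at that hard point it offers no working mechanism, and the lemma remains unproved by this route. The dyadic decomposition in the cited proof is not an interchangeable alternative here; it is the source of the geometric summability your continuous-scale approach lacks.
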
  
We also have the following corollary as a consequence of Lemma \ref{frchain} and  Lemma \ref{frac} along with interpolation. 
\begin{corollary}[Corollary 2.7 in \cite{KV10}]\label{noBesov}
	Let $F(u)=|u|^{p-2}u$ with $p\geq 2$ and let $s>1$ if $p$ is an even integer or  $1<s<p-1$ otherwise. Then
	$$
	\||\nabla|^sF(u)\|_{S^{\prime}(L^2)} \lesssim \||\nabla|^su\|_{S(L^2)}\|u\|^{p-2}_{L_{t,x}^{\frac{(p-2)(N-2)}{2}}}.
	$$
\end{corollary}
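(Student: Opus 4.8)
The plan is to exploit the $\inf$/$\sup$ structure of the $S'(L^2)$ and $S(L^2)$ norms: to bound $\||\nabla|^sF(u)\|_{S'(L^2)}$ I am free to work with one conveniently chosen admissible dual pair $(q',r')$, while $\|\cdot\|_{S(L^2)}$ controls $\||\nabla|^su\|$ at whatever single admissible pair the estimate happens to produce. The first reduction is to remove the time integration by H\"older in $t$, so that matters come down to a fixed-time spatial inequality of the schematic form $\||\nabla|^sF(u)\|_{L^{r'}_x}\lesssim \||\nabla|^su\|_{L^{\tilde r}_x}\,\|u\|_{L^{\beta}_x}^{p-2}$, after which the surviving $p-2$ factors of $u$ (in both $t$ and $x$) are arranged to assemble into $\|u\|_{L_{t,x}^{(p-2)(N-2)/2}}^{p-2}$. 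The matching of exponents here is forced by scaling, so one checks that the admissibility relation $\tfrac{2}{q}+\tfrac{N}{r}=\tfrac{N}{2}$ of Definition~\ref{admissible} is consistent with the chosen H\"older splitting.

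For the core spatial estimate, when $0<s\le 1$ it is exactly Lemma~\ref{frchain} applied with $G=F$, since $|F'(u)|\lesssim|u|^{p-2}$. When $s>1$ the fractional chain rule no longer applies directly, so I would peel off one full derivative: write $|\nabla|^sF(u)=|\nabla|^{s-1}\bigl(|\nabla|F(u)\bigr)$, use that $|\nabla|F(u)$ is comparable in $L^r_x$ ($1<r<\infty$) to $\nabla F(u)=F'(u)\nabla u$ via boundedness of Riesz transforms, and apply the fractional Leibniz (product) rule to $F'(u)\,\nabla u$. One resulting term carries all the remaining derivatives onto $\nabla u$, producing precisely $\|F'(u)\|_{L^c_x}\,\||\nabla|^su\|_{L^d_x}$, which has the desired shape after $\|F'(u)\|_{L^c_x}\sim\|u\|_{L^{c(p-2)}_x}^{p-2}$. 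The other term places $|\nabla|^{s-1}$ on $F'(u)$ and is then treated recursively.

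The recursion splits along the two regularity regimes in the hypothesis. If $p$ is an even integer, then $F$ is a polynomial in $u,\bar u$ and every $F^{(j)}$ is again a polynomial, so one may iterate the Leibniz/chain step for arbitrarily large $s>1$, distributing derivatives among the factors and reabsorbing the intermediate fractional-derivative norms into $\||\nabla|^su\|_{L^{\tilde r}_x}$ and $\|u\|_{L^\beta_x}$ by Sobolev and Gagliardo--Nirenberg interpolation (this is the ``along with interpolation'' step). If $p$ is not an even integer, then the derivatives of $F$ are only H\"older continuous, with $F^{(j)}$ of H\"older order roughly $\min(p-1-j,1)$; once the leftover fractional order drops below that H\"older exponent I switch from Lemma~\ref{frchain} to Lemma~\ref{frac}. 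The restriction $1<s<p-1$ is exactly what keeps the surviving fractional order strictly below the available H\"older regularity at every stage, which is what permits a choice of the parameter $\sigma$ meeting the constraint $s/\rho<\sigma<1$ of Lemma~\ref{frac} (applied to the leftover order) together with its integrability side condition $(1-\tfrac{s}{\rho\sigma})q_1>1$.

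I expect the main obstacle to be the exponent bookkeeping rather than any single delicate inequality: at each application of the product rule, chain rule, and Lemma~\ref{frac} the intermediate Lebesgue exponents must be selected so that the top-order factor always lands in a genuinely $L^2$-admissible pair controlled by $\||\nabla|^su\|_{S(L^2)}$, the remaining $p-2$ factors of $u$ interpolate exactly into the single space-time norm $L_{t,x}^{(p-2)(N-2)/2}$, and all side conditions of Lemma~\ref{frac} hold simultaneously. Since only finitely many terms arise, summing them and then taking the infimum over dual admissible pairs on the left and the supremum over admissible pairs on the right yields the stated bound, in line with Corollary~2.7 of \cite{KV10}.
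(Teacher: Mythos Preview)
Your proposal is essentially correct and matches the route the paper indicates: the paper does not give its own detailed proof of this corollary, it simply cites \cite{KV10} and records that the result follows ``as a consequence of Lemma~\ref{frchain} and Lemma~\ref{frac} along with interpolation,'' which is precisely the skeleton you flesh out (peel off an integer derivative, apply the fractional product/chain rule recursively, switch to Lemma~\ref{frac} once the surviving order falls below the available H\"older regularity of $F^{(j)}$, and interpolate to land on an admissible pair). One small remark: the hypothesis is $s>1$ in all cases, so your paragraph treating $0<s\le 1$ directly via Lemma~\ref{frchain} is not a case of the corollary itself but rather the base step of your recursion after one derivative has been removed; you may want to phrase it that way.
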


Next, we recall the Hardy-Littlewood-Sobolev inequality.
\begin{lemma}[Hardy-Littlewood-Sobolev inequality, \cite{Lieb83}]\label{HLS}
	For $0< b <N$ there exists a sharp constant $c_{r_2,b,N}>0$ such that
		$$
		\||x|^{-b}\ast f\|_{L^{r_1}(\R^N)} \leq c_{r_2,b,N}\|f\|_{L^{r_2}(\R^N)},
		$$  
		where $\frac{1}{r_2} + \frac{b}{N} = 1 + \frac{1}{r_1}$ and $1<r_1,\,r_2,\,\frac{N}{b} < \infty$. 
\end{lemma}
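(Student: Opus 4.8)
The plan is to prove the bound by duality together with a weak-type estimate for the kernel, following the classical route of Hardy, Littlewood and Sobolev; the \emph{sharpness} of the constant is then a separate, more delicate matter handled by rearrangement. First I would dualize: since $\|F\|_{L^{r_1}}=\sup_{\|g\|_{L^{r_1'}}\le1}\int Fg$, it suffices to bound the symmetric bilinear form
$$
\mathcal{I}(f,g) \defeq \int_{\R^N}\int_{\R^N}\frac{f(x)\,g(y)}{|x-y|^{b}}\,dx\,dy \lesssim \|f\|_{L^{r_2}}\,\|g\|_{L^{r_1'}},
$$
where $1/r_1+1/r_1'=1$, so the stated exponent relation $1/r_2+b/N=1+1/r_1$ becomes the symmetric scaling condition $1/r_2+1/r_1'+b/N=2$. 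This symmetry is what makes the estimate tractable and also sets up the rearrangement step.

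The key structural observation is that the kernel $|x|^{-b}$ lies in the weak Lebesgue space $L^{N/b,\infty}(\R^N)$: its distribution function is $|\{x:|x|^{-b}>\lambda\}|=c_N\,\lambda^{-N/b}$, so $|x|^{-b}$ has exactly the Lorentz regularity forced by scaling while failing to belong to $L^{N/b}$ itself (which is precisely why the endpoints $r_2=1$ and $r_1=\infty$ are excluded). I would then either (i) invoke the weak Young convolution inequality in Lorentz spaces, $\|f\ast k\|_{L^{r_1}}\lesssim\|k\|_{L^{N/b,\infty}}\|f\|_{L^{r_2}}$, whose exponent relation is exactly the one above, or (ii) prove the equivalent statement by hand: split the kernel at radius $R$ as $|x|^{-b}=|x|^{-b}\mathbf{1}_{|x|\le R}+|x|^{-b}\mathbf{1}_{|x|>R}$; the inner piece lies in $L^1$ (since $b<N$), so its convolution with $f$ stays in $L^{r_2}$ by Young, while the outer piece lies in $L^{r_2'}$, so its convolution with $f$ is bounded in $L^\infty$ by H\"older; choosing $R$ to balance the two contributions at height $\lambda$ yields a weak-type $(r_2,r_1)$ bound on the distribution function of $f\ast|x|^{-b}$, which I would upgrade to the strong bound by Marcinkiewicz interpolation between two nearby exponent pairs. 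Either route produces a finite $c_{r_2,b,N}$ throughout the open range $1<r_1,r_2,N/b<\infty$.

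For the sharpness claim I would return to the bilinear form and apply the Riesz rearrangement inequality: since $x\mapsto|x|^{-b}$ is strictly symmetric-decreasing, $\mathcal{I}(f,g)\le\mathcal{I}(f^*,g^*)$ for the symmetric-decreasing rearrangements $f^*,g^*$, whereas the Lebesgue norms are rearrangement-invariant. This reduces the extremal problem to radial monotone functions, and existence of an optimizer—hence a sharp, attained constant—follows by a compactness/concentration argument on this restricted class; the explicit value and the conformally covariant extremizers come from Lieb's competing-symmetries argument, which I would simply cite to \cite{Lieb83}.

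The main obstacle is the endpoint behavior and the sharpness, not the inequality itself. The weak-type-plus-interpolation argument is routine, but it is genuinely \emph{necessary} because $|x|^{-b}\notin L^{N/b}$, so a direct application of the classical strong Young inequality fails exactly on the critical line and one must exploit the Lorentz-space gain. The deeper difficulty—producing the sharp constant and its extremizers rather than merely a finite one—rests on rearrangement and conformal invariance; for the purposes of this paper it suffices to record that such a constant exists, deferring its explicit determination to \cite{Lieb83}.
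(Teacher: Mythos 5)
The paper offers no proof of this lemma at all: it is imported verbatim from Lieb \cite{Lieb83} as a known black box, so there is no internal argument to compare yours against. Your sketch is the classical textbook proof and is essentially correct. The kernel computation $|\{x:|x|^{-b}>\lambda\}|=c_N\lambda^{-N/b}$ is right; the splitting at radius $R$ works because $b<N$ makes the near piece integrable, while the far piece lies in $L^{r_2'}$ precisely when $br_2'>N$, which under the scaling relation is equivalent to $r_1<\infty$ --- so your argument correctly explains why the endpoints are excluded; balancing the two contributions gives the weak-type $(r_2,r_1)$ bound, and since the admissible pairs form an open set with $r_1>r_2$ (note $1/r_1=1/r_2-(1-b/N)<1/r_2$, so the Marcinkiewicz hypothesis $r_2\le r_1$ holds), interpolation between nearby pairs upgrades to the strong bound. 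Two small corrections are worth recording. First, once the inequality holds with \emph{some} finite constant, the sharp constant exists trivially as the operator norm (an infimum); what genuinely requires Lieb's rearrangement and conformal-invariance analysis is \emph{attainment} of that constant and its explicit value, and the explicit value is known only in the diagonal case $r_2=r_1'$ --- which, as it happens, is exactly the case the paper invokes later via the constant $C(N,b)$ in \eqref{sharpcon}. Second, the ``competing symmetries'' method you attribute to \cite{Lieb83} is actually due to Carlen and Loss (1990); Lieb's 1983 proof proceeds via Riesz rearrangement together with stereographic projection and conformal invariance. Neither slip affects the validity of your argument for the lemma as stated, and your weak-Young/Marcinkiewicz route is a perfectly sound self-contained substitute for the paper's bare citation.
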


\section{Local well-posedness at the critical regularity } 
\label{LWP}
In this section we discuss the local theory for solutions of  the equation \eqref{gH} in $\dot{H}^{s_c}(\R^N)$, for any $s_c\geq 0$. We consider the integral representation of \eqref{gH} with $u_0\in \dot{H}^{s_c}(\R^N)$ and 
$p\geq 2$:
\begin{equation}\label{duhamel}
u(x,t) = e^{it\Delta}u_0 + i \int_0^t e^{i(t-t')\Delta} (|x|^{-b}\ast |u|^p)|u|^{p-2}\, u(t')\,dt'.
\end{equation}
We also require that the nonlinearity power $p$ satisfies an additional constraint, $s_c<p-1$ if $p$ is not an even integer. This ensures that one can take the derivative of $|u|^{p-2}u$ term $s_c$ times. We obtain local well-posedness via contraction argument strategy using Strichartz estimates. 

\begin{proof} \textit{(of Theorem \ref{lwp})} For $T>0$ and $M>0$ determined later, let  
	\begin{align*}
	B_T=\{u\in  C([0,T]);\dot{H}^{s_c}(\R^N)\cap L^{q}([0,T]);\dot{W}^{s_c,r}(\R^N)\,:&\,\,\||\nabla|^{s_c}u\|_{L_t^qL_x^r([0,T]\times\R^N)}\leq M \}.
	\end{align*}
	We prove that the following operator 
	\begin{align}\label{Dope1}
	\Phi(u(t))=e^{it\Delta}u_0 + i \int_0^t e^{i(t-t')\Delta}N(u(t')) \,dt'
	\end{align} 
	is a contraction on the set $B_T$ for some $T>0$. Here, \begin{equation}\label{NL}
		N(u(\cdot))=(|x|^{-b}\ast |u|^p)|u|^{p-2}u(\cdot).
	\end{equation} 
	Denoting by $I=[0,T]$ and using Strichartz estimates, we obtain
	\begin{align}\label{lwp2}
	\||\nabla|^{s_c}\Phi(u(t))\|_{L_I^{q}L_x^{r}}&\leq  c_1\|u_0\|_{\dot{H}^{s_c}}+c_1\left(\int_{0}^{T}\||\nabla|^{s_c} N(u)\|^{q^{\prime}}_{L_x^{r^{\prime}}}\,dt\right)^{1/q^{\prime}}.
	\end{align} 
	Using H\"older's inequality, Lemma \ref{HLS} (for $b+s_c<N$ for $A_1$) and Sobolev inequality (take $r_s = \frac{2Np(p-1)}{(N-b)p+2}$ and observe that $r_s>r$, in particular, $r<N/s_c$), we get
	\begin{align}\label{lwp4}
	\||\nabla|^{s_c} N(u)\|_{L_I^{q^{\prime}}L_x^{r^{\prime}}} \leq A_1 + A_2,
	\end{align}
	 where $A_1$ is estimated as follows  
	\begin{align}\label{lwp4a}\notag
	A_1 &\leq \|(|x|^{-(b+s_c)}\ast|u|^p)\|_{L_I^{2}L_x^{\frac{2N}{b}}}\| u\|^{p-1}_{L_I^{q}L_x^{r_s}}\\
	&\leq c_{p,b,N}\|u\|^p_{L_I^{q}L_x^{r_s}}\||\nabla|^{s_c}u\|^{p-1}_{L_I^{q}L_x^{r}}\leq c_{p,b,N}\||\nabla|^{s_c}u\|^{2p-1}_{L_I^{q}L_x^{r}},
	\end{align}
	and for $A_2$, we use Corollary \ref{noBesov} along with H\"older's inequality, Lemma \ref{HLS} and Sobolev inequality to obtain
	\begin{align}\label{lwp4b}\notag
	A_2 &\leq \|(|x|^{-b}\ast|u|^p)\|_{L_I^{2}L_x^{\frac{2N(p-1)}{2+b(p-2)-N(p-2) }}}\|u\|^{p-2}_{L_I^{q}L_x^{r_s}}\||\nabla|^{s_c}u\|_{L_I^{q}L_x^{r}}\\
	&\leq c_{p,b,N}\|u\|^p_{L_I^{q}L_x^{r_s}}\||\nabla|^{s_c}u\|^{p-1}_{L_I^{q}L_x^{r}}\leq c_{p,b,N}\||\nabla|^{s_c}u\|^{2p-1}_{L_I^{q}L_x^{r}}.
	\end{align}
	Thus, \eqref{lwp4} yields
	\begin{equation}\label{lwp4c}
	\||\nabla|^{s_c}N(u)\|_{L_I^{q^{\prime}}L_x^{r^{\prime}}}\leq 2c_{p,b,N}\||\nabla|^{s_c}u\|^{2p-1}_{L_I^{q}L_x^{r}}.
	\end{equation}		
	Substituting \eqref{lwp4c} in \eqref{lwp2}, we get
	\begin{equation}\label{lwp2a}
	\||\nabla|^{s_c}\Phi(u(t))\|_{L_I^{q}L_x^{r}}\leq c_1\|u_0\|_{\dot{H}^{s_c}} + 2c_1c_{p,b,N}\||\nabla|^{s_c}u\|^{2p-1}_{L_I^{q}L_x^{r}}.
	\end{equation}
	Following a similar argument, we also obtain 
	\begin{align}\label{lwp2b}
	\|\Phi(u(t))\|_{L_I^{\infty}\dot{H}_x^{s_c}}\leq c_1\|u_0\|_{\dot{H}^{s_c}_x}+2c_1c_{p,b,N} \||\nabla|^{s_c}u\|^{2p-1}_{L_I^{q}L_x^{r}}.
	\end{align}
	Take $\|u_0\|_{\dot{H}^{s_c}_x}$ small enough so that
	\begin{align}\label{lwp6}
	\left(2\|u_0\|_{\dot{H}^{s_c}_x}\right)^{2(p-1)}\leq \frac{1}{4^{2p}(c_1)^{2p-1}c_{p,b,N}}.
	\end{align}
	Set $M=8c_1\|u_0\|_{\dot{H}^{s_c}_x}$. Thus, 
	$$
	M^{2(p-1)} \leq \frac{1}{16\,c_1\,c_{p,b,N}}.
	$$
	 Take $T>0$ such that $\|e^{it\Delta}(|\nabla|^{s_c}u_0)\|_{S(L^2;[0,T])}\leq \frac{M}{4}$.
	Thus, from estimates \eqref{lwp2a} and \eqref{lwp2b} we have that for $u\in B_T$ with $T$ as above
	\begin{align}\label{lwp5}\notag
	\|\Phi(u(t))\|_{L_I^{\infty}\dot{H}_x^{s_c}} + \|\Phi(u(t))\|_{L_I^{q_1}\dot{W}^{s_c,r}_{x}}&\leq 2c_1\|u_0\|_{\dot{H}^{s_c}_x}+4\,c_1\,c_{p,b,N}M^{2p-1}\\
	&\leq \frac{M}{4} + 4\,c_1\,c_{p,b,N}\,\frac{M}{16\,c_1\,c_{p,b,N}}=\frac{M}{4} + \frac{M}{4} < M,
	\end{align}
yielding $\Phi$ mapping $B_T$ into itself. 
	
	To complete the proof we need to show that the operator $\Phi$ is a contraction. This can be achieved by running the same argument as above on the difference 
	$$
	d(\Phi(u(t)),\Phi(v(t))):=\||\nabla|^{s_c}[\Phi(u(t))-\Phi(v(t))]\|_{L_I^{q}L^{r}_x},
	$$
	for $u,v\in B_T$. We first apply Strichartz estimates to get
	$$
	d(\Phi(u(t)),\Phi(v(t)))\leq c_1 \||\nabla|^{s_c}[\Phi(u(t))-\Phi(v(t))]\|_{L_I^{q^{\prime}}L_x^{r^{\prime}}},
	$$
	where
	\begin{align*}
	\||\nabla|^{s_c}[\Phi(u(t))-\Phi(v(t))]\|_{L_I^{q^{\prime}}L_x^{r^{\prime}}}\leq&\,\,\||\nabla|^s\big[\big(|x|^{-b}\ast |u|^p\big)\big(|u|^{p-2}u-|v|^{p-2}v\big)\big]\|_{L_I^{q^{\prime}}L_x^{r^{\prime}}}\\
	&+\||\nabla|^s\big[\big(|x|^{-b}\ast (|u|^p-|v|^p)\big)|v|^{p-2}v\big]\|_{L_I^{q^{\prime}}L_x^{r^{\prime}}}\\
	=&\,\,D_1+D_2.
	\end{align*}
	For $D_1$ we first use the fractional product rule
	\begin{align*}
	D_1\leq&\,\,\|(|x|^{-(b+s_c)}\ast|u|^p)\|_{L_I^{2}L_x^{\frac{2N}{b}}}\||u|^{p-2}u-|v|^{p-2}v\|_{L_I^{\frac{q}{p-1}}L_x^{\frac{r_s}{p-1}}}\\
	&+\|(|x|^{-b}\ast|u|^p)\|_{L_I^{2}L_x^{\frac{2N(p-1)}{2+b(p-2)-N(p-2)}}}\||\nabla|^{s_c}(|u|^{p-2}u-|v|^{p-2}v)\|_{L_I^{\frac{q}{p-1}}L_x^{\frac{2Np(p-1)}{Np(2p-3)-bp(p-2)-2}}},
	\end{align*}
	then using the similar calculations as in \eqref{lwp4a} and \eqref{lwp4b} together with \eqref{sdc1} yields
	\begin{align}\label{lwp13}
	D_1\leq 2c_{p,b,N}\||\nabla|^{s_c}u\|^{p}_{L_I^{q}L_x^{r}}\Big(\||\nabla|^{s_c}u\|^{p-2}_{L_I^{q}L_x^{r}}+\||\nabla|^{s_c}v\|^{p-2}_{L_I^{q}L_x^{r}}\Big)\||\nabla|^{s_c}(u-v)\|_{L_I^{q}L_x^{r}}.
	\end{align}
	Again using the fractional product rule, we have
	\begin{align*}
	D_2\leq&\,\,\||x|^{-(b+s_c)}\ast(|u|^p-|v|^p)\|_{L_I^{2}L_x^{\frac{2N}{b}}}\|v\|^{p-1}_{L_I^{q}L_x^{r_s}}\\
	&+\||x|^{-b}\ast(|u|^p-|v|^p)\|_{L_I^{2}L_x^{\frac{2N(p-1)}{ 2+b(p-2)-N(p-2)}}}\|v\|^{p-2}_{L_I^qL_x^{r_s}}\||\nabla|^{s_c}v\|_{L_I^{q}L_x^{r}}.
	\end{align*}
	Using the similar calculations as above along with \eqref{sdc5}, we get
	\begin{align}\label{lwp14}
	D_2\leq 2c_{p,b,N}\Big(\||\nabla|^{s_c}u\|^{p-1}_{L_I^{q}L_x^{r}}+\||\nabla|^{s_c}v\|^{p-1}_{L_I^{q}L_x^{r}}\Big)\||\nabla|^{s_c}(u-v)\|_{L_I^{q}L_x^{r}}\||\nabla|^{s_c}v\|^{p-1}_{L_I^{q}L_x^{r}}.
	\end{align}
	Combining \eqref{lwp13} and \eqref{lwp14}, we obtain that for $u,v\in B_T$
	\begin{align*}
	d(\Phi(u(t)),\Phi(v(t)))\leq 8c_1c_{p,b,N}M^{2(p-1)}d(u,v).
	\end{align*}
	Taking $M$ and $T$ as in \eqref{lwp5} together with \eqref{lwp6} implies that $\Phi$ is a contraction on $B_T$. Now continuous dependence with respect to $u_0$ is a direct consequence of the above estimates, we note that if $u$ and $v$ are the corresponding solutions of \eqref{duhamel} with initial data $u_0$ and $v_0$, respectively, then
	$$
	u(t)-v(t) = e^{it\Delta}(u_0-v_0) + i\int_{0}^{t}e^{i(t-t')\Delta}(N(u)-N(v))(t') \,dt'.
	$$
	Thus, the same argument as in \eqref{lwp13} and \eqref{lwp14} yields 
	\begin{align*}
	d(u(t),v(t))\leq c_1\|u_0-v_0\|_{\dot{H}^{s_c}_x}+\frac{1}{2}d(u(t),v(t)).
	\end{align*}
	This implies that if $\|u_0-v_0\|_{H^s_x}$ is small enough (see \eqref{lwp6}), we have that
	\begin{align*}
	d(u(t),v(t))\leq 2c_1\|u_0-v_0\|_{\dot{H}^{s_c}_x},
	\end{align*} 
	and this completes the proof.
\end{proof}
We now show the inhomogeneous version of Theorem \ref{lwp}.
\begin{theorem}\label{Lwp}
	Let $0<b<N$, $N \geq 1$ and $p\geq 2$ so that $s_c\geq 0$. 
	Assume in addition that if $p$ is not an even integer, then $s_c< p-1$. 
	Let $u_0\in H^{s_c}(\R^N)$. 
	Then there exists a unique solution $u(x,t)$ of the equation \eqref{gH} with data $u_0$ defined on $[0,T]$ for some $T>0$, and such that 
	\begin{align}\label{Lwpspace}
	u\in \mathcal{B}:= C([0,T];H^{s_c}(\R^N))\cap L^{q}([0,T]; W^{s_c,r}(\R^N)),	
	\end{align}
	where the pair $(q,r)$ is the $L^2$-admissible pair given by \eqref{pair}. Moreover, for all $0<\widetilde{T}<T$ there exists a neighborhood $U$ of $u_0$ in $H^{s_c}(\R^N)$ such that the map $U\rightarrow \mathcal{B}$, $\tilde{u}_0\mapsto\tilde{u}(t),$ is Lipschitz.
\end{theorem}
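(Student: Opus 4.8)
The plan is to mirror the contraction-mapping argument of Theorem \ref{lwp}, replacing the homogeneous derivative $|\nabla|^{s_c}$ by the inhomogeneous one $\langle\nabla\rangle^{s_c}$ throughout. For $M>0$ and $T>0$ to be fixed, I would work on the complete metric space
\[
B_T = \{\, u \in \mathcal{B} : \|\langle\nabla\rangle^{s_c} u\|_{L^q_I L^r_x} \le M \,\}, \qquad I = [0,T],
\]
endowed with the metric $d(u,v) = \|\langle\nabla\rangle^{s_c}(u-v)\|_{L^q_I L^r_x}$, and show that the Duhamel map $\Phi$ from \eqref{Dope1}--\eqref{NL} is a contraction on $B_T$. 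Applying the Strichartz estimate of Lemma \ref{strichartz} at the inhomogeneous level gives
\[
\|\langle\nabla\rangle^{s_c}\Phi(u)\|_{L^q_I L^r_x} \lesssim \|u_0\|_{H^{s_c}} + \|\langle\nabla\rangle^{s_c} N(u)\|_{L^{q'}_I L^{r'}_x},
\]
and likewise for the $L^\infty_I H^{s_c}_x$ norm, so everything reduces to estimating the nonlinearity $N(u)$ with the bracket derivative.

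The key new ingredient is the nonlinearity estimate. Since $\langle\nabla\rangle^{s_c}$ is dominated by the identity plus $|\nabla|^{s_c}$, I would split
\[
\|\langle\nabla\rangle^{s_c} N(u)\|_{L^{q'}_I L^{r'}_x} \lesssim \|N(u)\|_{L^{q'}_I L^{r'}_x} + \||\nabla|^{s_c} N(u)\|_{L^{q'}_I L^{r'}_x}.
\]
The second, purely homogeneous, term is bounded by $c_{p,b,N}\||\nabla|^{s_c}u\|^{2p-1}_{L^q_I L^r_x}$ exactly as in \eqref{lwp4}--\eqref{lwp4c}, using H\"older's inequality, the Hardy--Littlewood--Sobolev inequality (Lemma \ref{HLS}) and Corollary \ref{noBesov}. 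For the first, undifferentiated term I would run the same H\"older/HLS splitting but now invoke the \emph{inhomogeneous} Sobolev embedding $W^{s_c,r}(\R^N)\hookrightarrow L^{r_s}(\R^N)$ (valid since $r<r_s$, with $r_s=\frac{2Np(p-1)}{(N-b)p+2}$ as before) to bound $\|u\|_{L^q_I L^{r_s}_x}$ by $\|\langle\nabla\rangle^{s_c}u\|_{L^q_I L^r_x}$. Altogether this yields
\[
\|\langle\nabla\rangle^{s_c} N(u)\|_{L^{q'}_I L^{r'}_x} \lesssim \|\langle\nabla\rangle^{s_c}u\|^{2p-1}_{L^q_I L^r_x}.
\]

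With these bounds in hand, the self-mapping property follows verbatim from the homogeneous argument: choosing $M \sim c_1\|u_0\|_{H^{s_c}}$, requiring $\|u_0\|_{H^{s_c}}$ small (cf.\ \eqref{lwp6}) or, more generally, choosing $T$ small enough that $\|e^{it\Delta}\langle\nabla\rangle^{s_c}u_0\|_{S(L^2;[0,T])}\le M/4$, makes $\Phi$ map $B_T$ into itself. The contraction estimate comes from applying the same splitting to $N(u)-N(v)$, writing it as $D_1+D_2$ and using \eqref{sdc1} and \eqref{sdc5} precisely as in \eqref{lwp13}--\eqref{lwp14}, to obtain $d(\Phi(u),\Phi(v)) \le \tfrac12 d(u,v)$ for $M,T$ in the admissible range. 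The fixed point together with the standard continuity argument on \eqref{duhamel} then produces the unique solution in $\mathcal{B}$, and the Lipschitz dependence on $u_0$ follows from the difference estimate applied to distinct data, exactly as at the end of Theorem \ref{lwp}.

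The only genuinely new point — and the step I would watch most carefully — is the undifferentiated nonlinearity bound, where one must verify that the inhomogeneous embedding $W^{s_c,r}\hookrightarrow L^{r_s}$ closes with the \emph{same} admissible pair \eqref{pair} and the same auxiliary exponent $r_s$ used in the homogeneous case. Once this is checked, no new smallness or exponent restrictions beyond those of Theorem \ref{lwp} arise, so the hypotheses ($s_c\ge 0$, and $s_c<p-1$ when $p$ is not an even integer) are unchanged, and the remainder is a line-by-line transcription of the homogeneous proof with $|\nabla|^{s_c}$ replaced by $\langle\nabla\rangle^{s_c}$.
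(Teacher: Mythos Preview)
Your proposal is correct, and the overall contraction-mapping skeleton is the same as the paper's, but the bookkeeping is organized differently. You work with a \emph{single} norm $\|\langle\nabla\rangle^{s_c}u\|_{L^q_IL^r_x}$ on the ball $B_T$ and split the nonlinear estimate into an undifferentiated piece (handled via the inhomogeneous Sobolev embedding $W^{s_c,r}\hookrightarrow L^{r_s}$) plus the homogeneous $|\nabla|^{s_c}$-piece already treated in Theorem~\ref{lwp}. The paper instead keeps \emph{two} separate norms on the ball,
\[
\mathcal{B}_T=\bigl\{u:\ \||\nabla|^{s_c}u\|_{L^q_IL^r_x}\le 2M,\ \ \|u\|_{L^q_IL^r_x}\le 2c_1\|u_0\|_{L^2}\bigr\},
\]
re-uses \eqref{lwp2a} verbatim for the $|\nabla|^{s_c}$-level, and derives a fresh $L^2$-level estimate \eqref{Lwp3} in which exactly one undifferentiated factor $\|u\|_{L^q_IL^r_x}$ survives (closed against $\|u_0\|_{L^2}$ by Strichartz). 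Both routes close with the same admissible pair \eqref{pair} and the same auxiliary exponent $r_s$, so no new restrictions arise either way. What the paper's two-norm scheme buys is a slightly sharper output---separate control of the $L^2$ and $\dot H^{s_c}$ components of the solution by the corresponding pieces of the data---whereas your single-norm version is a bit more streamlined to write down. The contraction and Lipschitz-dependence steps are handled identically in both approaches, by referring back to the difference estimates \eqref{lwp13}--\eqref{lwp14}.
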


\begin{proof} For $T>0$ and $M>0$ determined later, let  
	\begin{align*}
	\mathcal{B}_T = \{u\in  \mathcal{B}\,:&\,\,\||\nabla|^{s_c}u\|_{L_t^qL_x^r([0,T]\times\R^N)}\leq 2M \,\,\,\text{and}\,\,\|u\|_{L_t^qL_x^r([0,T]\times\R^N)}\leq 2c_1\|u_0\|_{L^2} \}.
	\end{align*}
	We prove that $\Phi$ defined in \eqref{Dope1} is a contraction on the set $\mathcal{B}_T$ for some $T>0$. Denoting again $I=[0,T]$, using \eqref{lwp2} for $\||\nabla|^{s_c}u\|_{L_I^qL_x^r}$ and estimating the inhomogeneous part using Strichartz estimates, we have
	\begin{align}\label{Lwp1}
	\|\Phi(u(t))\|_{L^{q}_IL_x^{r}}\leq c_1\| u_0\|_{L_x^2}+c_1\left(\int_{0}^{T}\| N(u)\|^{q^{\prime}}_{L_x^{r^{\prime}}}\,dt\right)^{1/q^{\prime}}.
	\end{align}
	Using H\"older's inequality, Lemma \ref{HLS} and Sobolev inequality, we estimate
	\begin{align}\label{Lwp3}\notag
	\|N(u)\|_{L_I^{q^{\prime}}L_x^{r^{\prime}}} &\leq \|(|x|^{-b}\ast|u|^p)\|_{L_I^{2}L_x^{\frac{2N}{b}}}\| u\|^{p-1}_{L_I^{q}L_x^{r_s}}\leq c_{p,b,N} \|u\|^p_{L_I^{2p}L_x^{\frac{2Np}{2N-b}}}\||\nabla|^{s_c}u\|^{p-1}_{L_I^{q}L_x^{r}}\\
	&\leq c_{p,b,N}\|u\|^{p-1}_{L_I^{q}L_x^{r_s}}\|u\|_{L_I^{q}L_x^{r}}\||\nabla|^{s_c}u\|^{p-1}_{L_I^{q}L_x^{r}}\leq c_{p,b,N} \||\nabla|^{s_c}u\|^{2(p-1)}_{L_I^{q}L_x^{r}} \|u\|_{L_I^{q}L_x^{r}}.
	\end{align}
	Using \eqref{Lwp3}, we write \eqref{Lwp1} as
	\begin{equation}\label{Lwp1a}
	\|\Phi(u(t))\|_{L^{q}_IL_x^{r}}\leq c_1\| u_0\|_{L_x^2} + c_1c_{p,b,N}\||\nabla|^{s_c}u\|^{2(p-1)}_{L_I^{q}L_x^{r}} \|u\|_{L_I^{q}L_x^{r}}.
	\end{equation}
Then for $u\in\mathcal{B}_T$, we have
\begin{equation}\label{Lwp1b}
	\|\Phi(u(t))\|_{L^{q}_IL_x^{r}}\leq c_1\| u_0\|_{L_x^2}\left(1 + 2^{2p-1}c_1c_{p,b,N}M^{2(p-1)}\right).
\end{equation}
Next, invoking \eqref{lwp2a} for $u\in\mathcal{B}_T$, we have
\begin{equation}\label{Lwp2}
\||\nabla|^{s_c}\Phi(u(t))\|_{L^{q}_IL_x^{r}}\leq c_1\| u_0\|_{\dot{H}^{s_c}_x} + 2c_1c_{p,b,N}(2M)^{2p-1}.
\end{equation}
Take $\|u_0\|_{\dot{H}^{s_c}_x}$ small enough so that
\begin{align}\label{Lwp6}
\|u_0\|^{2(p-1)}_{\dot{H}^{s_c}_x}\leq \frac{1}{2^{2(p+1)}(c_1)^{2p-1}c_{p,b,N}}.
\end{align}
Set $M=c_1\|u_0\|_{\dot{H}^{s_c}_x}$. Thus,
\begin{equation}\label{Lwp7}
	M^{2(p-1)} \leq \frac{1}{2^{2(p+1)}\,c_1\,c_{p,b,N}}. 
\end{equation}
Take $T>0$ such that $\|e^{it\Delta}(|\nabla|^{s_c}u_0)\|_{S(L^2;[0,T])}\leq \frac{M}{2}$. Thus, using \eqref{Lwp6} and \eqref{Lwp7} on \eqref{Lwp1b} and \eqref{Lwp2}, we have 
\begin{equation}\label{Lwp8}
	\|\Phi(u(t))\|_{L^{q}_IL_x^{r}}\leq c_1\|u_0\|_{L^2_x}(1+\frac{1}{8})<2c_1\|u_0\|_{L^2_x}\quad\text{and}\quad\||\nabla|^{s_c}\Phi(u(t))\|_{L^{q}_IL_x^{r}}\leq M+\frac{M}{4}<2M. 
\end{equation}
A similar argument as in \eqref{Lwp8} yields 
	\begin{align}\label{Lwp9}
	\|\Phi(u(t))\|_{L_I^{\infty}L_x^{2}}\leq 2c_1\|u_0\|_{L^2_x}\quad\text{and}\quad\||\nabla|^{s_c}\Phi(u(t))\|_{L^{\infty}_IL_x^{2}}\leq 2M .
	\end{align}
Hence, \eqref{Lwp8} and \eqref{Lwp9} implies that $\Phi$ maps $\mathcal{B}_T$ into itself. The rest of the argument as in the proof of Theorem \ref{lwp}.	
\end{proof}

Since we now have $\dot{H}^{s_c}$ local well-posedness, we investigate the global existence of small data in $\dot{H}^{s_c}$ for $s_c \geq 0$. 
\begin{proof} \textit{(of Theorem \ref{sd-gext}.)}
	Denote
	\begin{align*}
		B = \Big\{ u\, :\, \|u\|_{S(\dot{H}^{s_c})}& \leq 2\,\|e^{it\Delta}u_0\|_{S(\dot{H}^{s_c})} \quad\text{and}\quad \||\nabla|^{s_c}u\|_{S(L^2)} \leq 2\,c\|u_0\|_{\dot{H}^{s_c}}\Big\},
	\end{align*}
	and define
	\begin{align}\label{sd0}
	\Phi_{u_0}(u) = e^{it\Delta}u_0 + i \int_0^t e^{i(t-t')\Delta} N(u(t'))\,dt',
	\end{align}
	where $N(u(\cdot))$ as in \eqref{NL}. Applying the triangle inequality and Strichartz estimates to \eqref{sd0}, we obtain
	\begin{align}\label{sd1}
	\|\Phi_{u_0}(u)\|_{S(\dot{H}^{s_c})} \leq \|e^{it\Delta}u_0\|_{S(\dot{H}^{s_c})} + c_1\||\nabla|^{s_c}N(u)\|_{L_t^{q^{\prime}}L_x^{r^{\prime}}},
	\end{align}
	and 
	\begin{equation}\label{sd2}
		\||\nabla|^{s_c}\Phi_{u_0}(u)\|_{S(L^2)} \leq c_1\|u_0\|_{\dot{H}^{s_c}} + c_1\||\nabla|^{s_c}N(u)\|_{L_t^{q^{\prime}}L_x^{r^{\prime}}}.
	\end{equation}
	 Using the estimates from Theorem \ref{lwp}, we get
	$$
	\||\nabla|^{s_c}N(u)\|_{L_t^{q^{\prime}}L_x^{r^{\prime}}}\leq 2\,c_{p,b,N}\|u\|^{2(p-1)}_{S(\dot{H}^{s_c})}\||\nabla|^{s_c}u\|_{S(L^2)}.
	$$
	Therefore, \eqref{sd1} gives
	\begin{align}\label{sd3}
		\|\Phi_{u_0}(u)\|_{S(\dot{H}^{s_c})} \leq \|e^{it\Delta}u_0\|_{S(\dot{H}^{s_c})} + 2c_1c_{p,b,N}\|u\|^{2(p-1)}_{S(\dot{H}^{s_c})}\||\nabla|^{s_c}u\|_{S(L^2)},
	\end{align}
	and \eqref{sd2} gives
	\begin{align}\label{sd4}
		\||\nabla|^{s_c}\Phi_{u_0}(u)\|_{S(L^2)} \leq c_1\|u_0\|_{\dot{H}^{s_c}} + 2c_1c_{p,b,N}\|u\|^{2(p-1)}_{S(\dot{H}^{s_c})}\||\nabla|^{s_c}u\|_{S(L^2)}.
	\end{align}
	Thus, from \eqref{sd3} for $u\in B$, we obtain 
	\begin{align*}
	\|\Phi_{u_0}(u)\|_{S(\dot{H}^{s_c})} \leq \|e^{it\Delta}u_0\|_{S(\dot{H}^{s_c})}\left(1+c_1\,c_{p,b,N}\, 2^{2p}\, \|e^{it\Delta}u_0\|_{S(\dot{H}^{s_c})}^{2p-3}\,A\right),
	\end{align*}
	which implies that we need
	\begin{equation}\label{sd5}
		c_1\,c_{p,b,N} \, 2^{2p}\, \|e^{it\Delta}u_0\|_{S(\dot{H}^{s_c})}^{2p-3}\,A\leq 1.
	\end{equation}
	And, from \eqref{sd4} for $u\in B$, we obtain
	\begin{align*}
		\||\nabla|^{s_c}\Phi_{u_0}(u)\|_{S(L^2)} \leq c_1\|u_0\|_{\dot{H}^{s_c}} \left(1+2^{2p}\,c_{p,b,N} \|e^{it\Delta}u_0\|_{S(\dot{H}^{s_c})}^{2(p-1)}\right),
	\end{align*}
	which implies that we require
	\begin{equation}\label{sd6}
	c_{p,b,N} \, 2^{2p}\, \|e^{it\Delta}u_0\|_{S(\dot{H}^{s_c})}^{2(p-1)}\leq 1.
	\end{equation}
	Therefore, from \eqref{sd5} and \eqref{sd6}, choosing 
	$$
	\delta < \delta_0 \leq \min\left(\frac{1}{\sqrt[2p-3]{2^{2p+1}c_1c_{p,b,N}A}},\frac{1}{\sqrt[2(p-1)]{2^{2p+1}c_{p,b,N}}}\right),
	$$ 
	implies that $\Phi_{u_0}\in B$. Now we show that $\Phi_{u_0}(u)$ is a contraction on $B$ with the metric
	$$
	d(u,v)=\||\nabla|^{s_c}(u-v)\|_{S(L^2)}.
	$$ 
	For $u$, $v\in B$, by Strichartz estimates, we obtain
	\begin{align}\label{sd13}
	\||\nabla|^{s_c}[\Phi_{u_0}(u)-\Phi_{u_0}(v)]\|_{S(L^2)}&\leq c_1\||\nabla|^{s_c}[N(u)-N(v)]\|_{L_t^{q^{\prime}}L_x^{r^{\prime}}}.
	\end{align}
	The triangle inequality applied to the term on the right-hand side of 
	\eqref{sd13} yields 
	\begin{align*}
	\||\nabla|^{s_c}[N(u)-N(v)]\|_{L_t^{q^{\prime}}L_x^{r^{\prime}}}\leq&\,\, \||\nabla|^{s_c}[\big(|x|^{-b}\ast |u|^p\big)\big(|u|^{p-2}u-|v|^{p-2}v\big)]\|_{L_t^{q^{\prime}}L_x^{r^{\prime}}}\\
	&+\||\nabla|^{s_c}[\big(|x|^{-b}\ast (|u|^p-|v|^p)\big)|v|^{p-2}v]\|_{L_t^{q^{\prime}}L_x^{r^{\prime}}}.
	\end{align*} 
	Using the estimates \eqref{lwp13} and \eqref{lwp14}, we obtain
	\begin{align*}
	\||\nabla|^{s_c}[N(u)-N(v)]\|_{L_t^{q^{\prime}}L_x^{r^{\prime}}}	\leq\,\,2&\,c_{p,b,N}\Big[\|u\|^p_{S(\dot{H}^{s_c})}\Big(\|u\|^{p-2}_{S(\dot{H}^{s_c})}+\|v\|^{p-2}_{S(\dot{H}^{s_c})}\Big)\\
	&+\Big(\|u\|^{p-1}_{S(\dot{H}^{s_c})}+\|v\|^{p-1}_{S(\dot{H}^{s_c})}\Big)\|v\|^{p-1}_{S(\dot{H}^{s_c})}\Big]\||\nabla|^{s_c}(u-v)\|_{S(L^2)}.
	\end{align*}
	For $u$, $v\in B$, we have that
	\begin{align}\label{sd14}
	\||\nabla|^{s_c}[N(u)-N(v)]\|_{L_t^{q^{\prime}}L_x^{r^{\prime}}}\leq 2^{2p+1}\,c_{p,b,N}\|e^{it\Delta}u_0\|^{2(p-1)}_{S(\dot{H}^{s_c})}\||\nabla|^{s_c}(u-v)\|_{S(L^2)}.
	\end{align}
	Combining \eqref{sd14} with 
	\eqref{sd13}, we get
	$$
	d(\Phi_{u_0}(u),\Phi_{u_0}(v))\leq 2^{2(p+1)}c_1\,c_{p,b,N}\|e^{it\Delta}u_0\|^{2(p-1)}_{S(\dot{H}^{s_c})}d(u,v)\leq \frac{1}{2}d(u,v)
	$$
	for $\delta_1\leq \sqrt[2(p-1)]{\frac{1}{2^{2p+3}c_1c_{p,b,N}}}.$ 
	Finally, taking $\delta \leq \min(\delta_0,\delta_1)$ concludes that $\Phi_{u_0}$ is a contraction.
\end{proof}

\section{Blow-up criterion}\label{main1}

Before proving Theorem \ref{blowupL}, we recall that solutions $u(t)$ of \eqref{gH} with finite variance, $V(0) = \|xu_0\|_{L^2}^2 < \infty$, satisfy the following virial identities
\begin{equation*}
V_t(t) = 4\Im \int_{\R^N} \bar{u}\,x\cdot\nabla u\,dx,
\end{equation*}
\begin{align*}\label{virial2}
V_{tt}(t) = 16 E[u] - \frac{8s_c(p-1)}{p} Z(u) 
\equiv 16(s_c(p-1)+1)E[u] - 8s_c(p-1)\|\nabla u\|_{L^2}^2.
\end{align*}
\begin{proof} \textit{(of Theorem \ref{blowupL})}
Recalling the decomposition (4.1) from \cite{DR15} 
\begin{equation*}
	\frac{N^2}{4} \|u\|_{L^2}^4 + \left|\frac{V_t(t)}{4}\right|^2 \leq V(t)\|\nabla u\|_{L^2}^2,
\end{equation*} 
we obtain 
\begin{equation}\label{L3}
	V_{tt}(t) \leq 16 (s_c(p-1)+1) E[u] - 2s_c(p-1) \frac{N^2(M[u])^2}{V(t)} - \frac{s_c(p-1)}{2}\frac{|V_t(t)|^2}{V(t)}. 
\end{equation}
We rewrite the above by making a substitution $V(t)=B^{\frac{1}{\alpha+1}}(t)$, where $\alpha=\frac{s_c(p-1)}{2}=\frac{N(p-2)+b-2}{4} > 0$, to remove the last term and re-write \eqref{L3} as
\begin{equation*}
	B_{tt}\leq 16(\alpha+1)(2\alpha+1)E[u]B^{\frac{\alpha}{\alpha+1}}-4\alpha(\alpha+1)N^2(M[u])^2B^{\frac{\alpha-1}{\alpha+1}}.
\end{equation*}
Set $\displaystyle \gamma = 4\sqrt{2}\,\frac{E[u]}{\omega\,M[u]}$, where $\displaystyle \omega^2 = \frac{N^2\,\alpha}{4\,(2\alpha+1)} = \frac{N^2(N(p-2)+b-2)}{8(N(p-2)+b)}$  and  introduce the rescaled variables $v$ and the time $s$ as follows: $s=\gamma \, t$ and
$$
B(t) = B_{\max} \, v(s), \quad 
\mbox{where} \quad 
B_{\max}=\left(\frac{(\omega\,M[u])^2}{E[u]}\right)^{\alpha+1}.
$$ 
Then, with these new variables, we obtain
\begin{equation}\label{L6}
	\frac{2}{(\alpha+1)(2\alpha+1)}v_{ss} \leq v^{\frac{\alpha}{\alpha+1}}-v^{\frac{\alpha-1}{\alpha+1}}, \quad s\in[0, T^{*}/\gamma),
\end{equation}
and the equation \eqref{L6} can be written as 
\begin{equation*}
	 v_{ss} \leq -c\,\frac{\partial\widetilde{U}}{\partial v},
\end{equation*}
where $c=\frac{(\alpha+1)(2\alpha+1)}{2}$ and the potential 
$$
\widetilde{U}(v)=\frac{\alpha+1}{2\alpha}v^{\frac{2\alpha}{\alpha+1}} -\frac{\alpha+1}{2\alpha+1}v^{\frac{2\alpha+1}{\alpha+1}}.
$$ 
In fact for some function $g^2(s)>0$, we have
$$
v_{ss} = -c\,\frac{\partial\widetilde{U}}{\partial v} - g^2(s),
$$
and using the same analogy from mechanics as in \cite{L-10}, \cite{HPR10}, \cite{DR15}, let $v(t)$ be the coordinate of the particle with the unit mass moving under two forces: $F_1=-c\,\frac{\partial\widetilde{U}}{\partial v}$ and an unknown external force $F_2=-g^2(t)<0$ (because of the sign, it pulls the particle towards the origin). The collapse occurs if the particle reaches the origin in finite time, i.e., when $v(t^*)=0$ for some $0<t^*<\infty$. If it reaches the origin without the force $F_2 = -g^2(t)$, then it would also reach the origin when this force is applied, thus, leading to the following equation
\begin{equation}\label{L9a}
 \frac{1}{c}\,v_{ss} + \frac{\partial\widetilde{U}}{\partial v}=0.
\end{equation}
The energy of this particle, defined as
\begin{equation}\label{L10}
	\mathcal{E}(s)=\frac{1}{2\,c}v_s^2+\widetilde{U}(v(s)),
\end{equation}
is conserved. Note that the curve for $\widetilde{U}$ is increasing from the origin (for positive $v$) and then decreasing with the local maximum $\widetilde{U}_{\max}=\frac{\alpha+1}{2\alpha(2\alpha+1)}$ attained at $v=1$. Using the energy from \eqref{L10}, we obtain the blow-up conditions for \eqref{L9a} similar to Proposition 4.1 in \cite{DR15}, see also \cite{HPR10}: 
\begin{enumerate}
	\item[(I)]  $\mathcal{E}(0) < \widetilde{U}_{\max}$ and $v(0) <1$, 
	\item[(II)] $\mathcal{E}(0) > \widetilde{U}_{\max}$ and $v_s(0) < 0$,
	\item[(III)] $\mathcal{E}(0) = \widetilde{U}_{\max}$, $v_s(0) < 0$ and $v(0) <1$.
\end{enumerate}
 Define $v=\widetilde{V}^{\alpha+1}$ and rewrite the energy as  
\begin{align*}
\mathcal{E}&=\frac{\alpha+1}{2\alpha+1}\,\widetilde{V}^{2\alpha} \left(\widetilde{V}_s^2 - \widetilde{V} + \frac{2\alpha+1}{2\alpha} \right).
\end{align*}
Observe that 
\begin{equation}\label{L13}
	\mathcal{E} < \widetilde{U}_{\max} \iff \widetilde{V}_s^2 < \frac{1}{2\alpha\,\widetilde{V}^{2\alpha}}+\widetilde{V}-\frac{2\alpha+1}{2\alpha}.
\end{equation}
Let $k=2\alpha=s_c(p-1)$ and set the function
\begin{equation}\label{L14}
	f(x)=\sqrt{\frac{1}{kx^k}+x-\frac{1+k}{k}},
\end{equation}
then the blow-up conditions (I)-(III) with \eqref{L13} and \eqref{L14} are given as 
\begin{equation*}
	\widetilde{V}_s(0) < \begin{cases}
	+ f(\widetilde{V}(0)) & \quad \text{if} \,\,\,\widetilde{V}(0) < 1\\
	- f(\widetilde{V}(0)) & \quad \text{if}\,\,\, \widetilde{V}(0) \geq 1	
	\end{cases} .
\end{equation*}  
Substituting for $v$, $B_{\max}$ in $V(t)=\left(B_{\max}v\right)^{\frac{1}{\alpha+1}}$ yields 
$$
V(t)=\frac{(\omega M[u])^2}{E[u]}\widetilde{V}\left(4\sqrt{2}\,\frac{E[u]}{\omega M[u]}\,t\right),
$$  
and therefore, we obtain
\begin{align}\label{L16}
\frac{V_t(0)}{\omega\,M[u]} <  4\sqrt{2}\, f\left(\frac{E[u]V(0)}{(\omega\,M[u])^2}\right),
\end{align}
as claimed.
\end{proof}

{\it Remark.} For the real-valued initial data, the expression \eqref{L16} can be simplified to
\begin{align}\label{L16real}
V(0) < \frac{(\omega\,M[u])^2}{E[u]}.
\end{align}
Thus, knowing how big the initial variance for the real-valued data is, gives us the way to show that the solution from this initial data will blow-up in finite time. We use this in examples in the next section.

\section{Examples}\label{eg}

In this section we show examples of known thresholds in the energy-subcritical, critical and supercritical cases for the Gaussian initial data. 

\subsection{Review of known thresholds}
Before discussing examples we mention the dichotomy results from \cite{AKAR1} as they are helpful in identifying thresholds for finite vs infinite time existence in the energy-subcritical and critical cases. We recall that for $0<s_c<1$ the quantities $M[u_0]^{1-s_c}E[u_0]^{s_c}$ and $\|u_0\|_{L^2(\R^N)}^{1-s_c} \,  \|\nabla u_0 \|_{L^2(\R^N)}^{s_c}$ are scale-invariant, as it was first observed in \cite{HR07} in the NLS context.  For the following statement, we renormalize them using the sharp constant of the convolution-type Gagliardo-Nirenberg inequality \eqref{GNC}, or equivalently, the $L^2$-norm of ground states to the equation $-\Delta Q + Q - (|x|^{-b} \ast |Q|^{p}) |Q|^{p-2} Q=0$, see discussion on this in Section 4 in \cite{AKAR1} as well as the derivation of the sharp constant. For now note that the sharp constant of the following inequality 
\begin{equation}\label{GNC}
Z(u)\leq C_{GN}\|\nabla u\|_{L^2}^{2s_c(p-1)+2}\|u\|_{L^2}^{2(1-s_c)(p-1)}
\end{equation}
is attained at ground states $Q$ and is equal to $C_{GN}=\|Q\|_{L^2(\mathbb{R}^N)}^{-2(p-1)}$ (note that the value $\|Q\|_{L^2(\R^N)}$ is uniquely determined). In a spirit of NLS and to state Theorem \ref{dichotomy} concisely below, we define (for $s_c>0$)
\begin{equation}\label{norMEG}
	\mathcal{ME}[u] = \frac{M[u]^{\frac{1-s_c}{s_c}}E[u]}{M[Q]^{\frac{1-s_c}{s_c}}E[Q]}\quad\text{and}\quad \mathcal{G}[u(t)]=\frac{\|u\|^{\frac{1-s_c}{s_c}}_{L^2(\R^N)}\|\nabla u(t)\|_{L^2(\R^N)}}{\|Q\|^{\frac{1-s_c}{s_c}}_{L^2(\R^N)}\|\nabla Q\|_{L^2(\R^N)}},
\end{equation}
where the denominators are
$$
\|Q\|^{\frac{1-s_c}{s_c}}_{L^2(\R^N)}\|\nabla Q\|_{L^2(\R^N)} = \left(\frac{p\left(C_{GN}\right)^{-1}}{s_c(p-1)+1}\right)^\frac{1}{2s_c(p-1)}
$$
and 
$$
M[Q]^{\frac{1-s_c}{s_c}}E[Q] = \frac{s_c(p-1)}{2s_c(p-1)+2}\,\left(\|Q\|^{\frac{1-s_c}{s_c}}_{L^2(\R^N)}\|\nabla Q\|_{L^2(\R^N)}\right)^{2}.
$$
In \cite{AKAR1} we proved that in the inter-critical regime $(0 < s_c < 1)$ there is a dichotomy for the $H^1$ solutions under the mass-energy threshold $\mathcal{ME}[u_0]<1$ via the well-known concentration compactness and rigidity method of Kenig-Merle \cite{KM06} following the strategy of \cite{HR08}, \cite{DHR08}, \cite{HR10}, see Theorem \ref{dichotomy}. In \cite{AKA19} (see also \cite{AKADM} for 2d), the first author gave an alternative proof of scattering without the concentration-compactness, using Dodson-Murphy approach \cite{DM17}. We summarize results in the following statement.

\begin{theorem}[\cite{AKAR1}, \cite{AKA19}]\label{dichotomy}
Let $u_0\in H^1(\R^N)$, $0< s_c < 1$ and let $u(t)$ be the corresponding solution to \eqref{gH} with the maximal time existence interval $(-T_*, T^*)$. Suppose that $\mathcal{ME}[u_0]  < 1$.
\begin{enumerate}
\item 
If $\mathcal{G}[u_0] < 1$, then the solution exists globally in time with $\mathcal{G}[u(t)] < 1$ for all $t\in \R$, and $u(t)$ scatters in $H^1$.
\item 
If $\mathcal{G}[u_0] > 1$, then $\mathcal{G}[u(t)] > 1$ for all $t \in (-T_*, T^*)$. Moreover, if
\begin{enumerate}
\item 
$|x|u_0\in L^2(\R^N)$ or $u_0$ is radial, then the solution blows-up in finite time,
\item 
$u_0$ is of infinite variance and nonradial, then either the solution blows-up in finite time or there exits a sequence of times $t_n\rightarrow +\infty$ (or $t_n\rightarrow -\infty$) such that $\|\nabla u(t_n)\|_{L^2(\R^N)}\rightarrow \infty$.
\end{enumerate}
\end{enumerate}
\end{theorem}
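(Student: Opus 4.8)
The plan is to follow the now-standard Kenig--Merle concentration-compactness and rigidity scheme, adapted to the nonlocal nonlinearity, exactly as the citation to \cite{HR08,DHR08,HR10} suggests; I sketch both the scattering half and the blow-up half, and indicate where the convolution term forces new work. The starting point for both halves is a \emph{variational trapping} lemma. Using the sharp Gagliardo--Nirenberg inequality \eqref{GNC} with best constant $C_{GN}=\|Q\|_{L^2}^{-2(p-1)}$, one rewrites the energy \eqref{energy} as a function of $\|\nabla u\|_{L^2}$ at fixed mass and shows that the scale-invariant products entering \eqref{norMEG} obey a closed inequality: whenever $\mathcal{ME}[u_0]<1$, the set $\{\mathcal{G}<1\}$ and the set $\{\mathcal{G}>1\}$ are each invariant under the flow (the intermediate value $\mathcal{G}=1$ cannot be crossed because it would force $\mathcal{ME}\ge 1$). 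First I would establish this dichotomy of the energy surface, together with the quantitative gap: in the region $\mathcal{G}[u_0]<1$ one gets a uniform bound $\|\nabla u(t)\|_{L^2}\le c\,\|u_0\|_{\dot H^1}$, hence global existence by the conservation laws \eqref{mass}--\eqref{energy} and the $H^1$ local theory of \cite{AKAR1}; in the region $\mathcal{G}[u_0]>1$ one gets instead a uniform lower bound that, inserted into the virial identity for $V_{tt}(t)=16(s_c(p-1)+1)E[u]-8s_c(p-1)\|\nabla u\|_{L^2}^2$, yields $V_{tt}(t)\le -\delta<0$ for all $t$ in the existence interval.

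For part (1), scattering, I would run the induction-on-energy/minimal-blow-up-solution argument. The ingredients are: a small-data scattering theory in the scattering norm (available from Theorem \ref{sd-gext} specialized to $s_c\in(0,1)$), stability/perturbation estimates for the Duhamel map \eqref{duhamel} built from the Strichartz bounds of Lemma \ref{strichartz} and the nonlinear estimates \eqref{lwp4a}--\eqref{lwp4b}, and a linear profile decomposition for $e^{it\Delta}$ bounded sequences. Assuming scattering fails below the threshold, one extracts a critical element $u_c$ of minimal $\mathcal{ME}$ whose orbit $\{u_c(t)\}$ is precompact in $H^1$ modulo the translation symmetry. The rigidity step then eliminates $u_c$: combining precompactness with a localized virial (truncated-variance) computation and the trapping lower bound shows $V_{tt}$ stays negative along a subsequence while $V$ stays bounded, a contradiction, forcing $u_c\equiv 0$. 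Alternatively, as in \cite{AKA19}, one bypasses the critical element entirely and invokes the Dodson--Murphy interaction-Morawetz/virial route \cite{DM17}, which is cleaner but still requires the same coercivity input from the trapping lemma.

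For part (2), I would separate the three variance scenarios. When $|x|u_0\in L^2$, the genuine virial identity applies: $V(t)$ is a positive, concave-with-negative-curvature function with $V_{tt}\le-\delta$, so $V(t)$ must vanish in finite time, forcing $\|\nabla u\|_{L^2}\to\infty$ and hence finite-time blow-up. For radial $u_0$ of infinite variance, I replace $V$ by a truncated variance $V_R(t)=\int \chi(x/R)|x|^2|u|^2\,dx$ with a smooth cutoff $\chi$; the Ogawa--Tsutsumi-type localized virial estimate produces $\partial_t^2 V_R\le -\delta+ (\text{error})$, where the error is controlled by the radial Strauss decay and, crucially here, by an $R$-tail bound on the convolution term $|x|^{-b}\ast|u|^p$ obtained from the Hardy--Littlewood--Sobolev inequality of Lemma \ref{HLS}; choosing $R$ large absorbs the error and again yields blow-up. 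For nonradial infinite-variance data one loses the decay needed to kill the error term, so the localized virial only gives the weaker alternative stated: either $V_R$ still collapses (blow-up) or the uniform negativity of the leading term forces $\|\nabla u(t_n)\|_{L^2}\to\infty$ along some time sequence.

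The main obstacle I anticipate is the \emph{nonlocal nonlinearity} entering the two most delicate steps. In the profile decomposition and stability theory one must show that the convolution term $(|x|^{-b}\ast|u|^p)|u|^{p-2}u$ decouples along orthogonal profiles and is stable under Strichartz perturbations; the double nonlocal structure (product of a Riesz-type potential with a power) means the standard pointwise nonlinear estimates are unavailable and must be replaced throughout by Hardy--Littlewood--Sobolev plus fractional Leibniz, as already foreshadowed in \eqref{lwp4a}--\eqref{lwp4b}. Equally delicate is the rigidity/localized-virial error control: bounding the spatial tail of $|x|^{-b}\ast|u|^p$ outside a ball of radius $R$ requires a nonlocal tail estimate that has no analogue in the local NLS case and is precisely the new technical input distinguishing this theorem from \cite{HR08,DHR08,HR10}.
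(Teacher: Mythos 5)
Your sketch is correct and takes essentially the same approach as the actual proof: the paper does not prove Theorem \ref{dichotomy} itself but recalls it from \cite{AKAR1} (Kenig--Merle concentration compactness plus rigidity following \cite{HR08}, \cite{DHR08}, \cite{HR10}) and \cite{AKA19} (the Dodson--Murphy route \cite{DM17}), which is precisely the two-track argument you outline, including the variational trapping via the sharp inequality \eqref{GNC}, the virial/localized-virial blow-up analysis, and the Hardy--Littlewood--Sobolev control of the nonlocal term that you correctly identify as the new technical input. No gaps beyond the expected technical work you already flag.
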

We note that the proof of global existence in Theorem \ref{dichotomy} part (1) and blow-up in part 2(a) will work for $s_c=0$ and $s_c=1$. In the case $s_c=1$ (energy-critical gHartree), or equivalently $p = \frac{2N-b}{N-2}$ for $N\geq 3$, the inequality \eqref{GNC} becomes
\begin{equation}\label{eSob}
	Z(u)\leq C_{GN}\, \|\nabla u\|_{L^2(\R^N)}^{\frac{2(2N-b)}{N-2}},
\end{equation}
and the sharp constant $C_{GN}$ for \eqref{eSob} is given by (for instance, see \cite{DY19})
\begin{equation}\label{sharpcon}
C_{GN} =\pi^{b/2}\left(\frac{1}{N(N-2)\pi}\left(\frac{\Gamma(N)}{\Gamma(N/2)}\right)^{\frac{N-b+2}{2N-b}}\right)^{\frac{2N-b}{N-2}}\frac{\Gamma\left(\frac{N-b}{2}\right)}{\Gamma\left(N-\frac{b}{2}\right)}
= \big(C_N\big)^{\frac{2(2N-b)}{N-2}}\,C(N,b),
\end{equation}
where $C_N= \frac{1}{\sqrt{N(N-2)\pi}}\left(\frac{\Gamma(N)}{\Gamma(N/2)}\right)^{1/N} $ is the best constant for Sobolev inequality 
$$
\|u\|_{L^{\frac{2N}{N-2}}(\R^N)}\leq C_N\,\|\nabla u\|_{L^2(\R^N)}
$$
and
$
C(N,b)=\pi^{b/2}\,\frac{\Gamma\left(\frac{N-b}{2}\right)}{\Gamma\left(N-\frac{b}{2}\right)}\,\left(\frac{\Gamma(N)}{\Gamma(N/2)}\right)^{1-\frac{b}{N}}
$ 
is the sharp constant in Hardy-Littlewood-Sobolev inequality (see \cite{LiebL2001} ,\cite{Lieb83})
\begin{equation*}
\left|\int_{\R^N}\int_{\R^N} \frac{f(x)\, h(y)}{|x-y|^{b}} \,dx\,dy\right|\leq C(N,b)\,\|f\|_{L^{\frac{2N}{2N-b}}(\R^N)}\,\|h\|_{L^{\frac{2N}{2N-b}}(\R^N)}.
\end{equation*}
In our examples, we use  $b=N-2$, in which case one can verify that 
\begin{equation}\label{EQ}
Q(x) = \left(\frac{N(N-2)}{\pi^{N/2}}\,\Gamma\left(1+\frac{N}{2}\right)\right)^{\frac{N-2}{8}}\,\frac{1}{\left(1+|x|^2\right)^{\frac{N-2}{2}}} 
\end{equation}
is one of the solutions for
\begin{equation}\label{EQeq}
	\Delta Q + \left(|x|^{-(N-2)}\ast|Q|^\frac{2N-b}{N-2}\right)|Q|^{\frac{4-b}{N-2}}Q=0,
\end{equation}
where $(-\Delta)^{-1}f = I_{N-2}\ast f = \frac{\Gamma\left(\frac{N}{2}-1\right)}{4\,\pi^{N/2}}\,\frac{1}{|x|^{N-2}}\ast f$. In other words, the sharp constant $C_{GN}$ from \eqref{sharpcon} can be attained at $Q$, i.e., for $b=N-2$, we have an equality in \eqref{eSob} 
\begin{equation}\label{atQ}
	Z(Q) = C_{GN}\,\|\nabla Q\|^{\frac{2(N+2)}{N-2}}_{L^2(\R^N)}.
\end{equation}
Furthermore, for the function $Q$ in \eqref{EQ}, multiplying the equation \eqref{EQeq} by $Q$ and performing integration by parts, we have 
$
\|\nabla Q\|_{L^2(\R^N)}^2 = Z(Q). 
$ 
Thus, using this along with \eqref{atQ} we deduce that
\begin{equation}\label{EgradQ}
		\|\nabla Q\|^2_{L^2(\R^N)} = \frac{1}{\big(C_{GN}\big)^{(N-2)/4}}\quad\text{and}\quad E[Q] = \frac{2}{N+2}\,\|\nabla Q\|^2_{L^2(\R^N)} = \frac{2}{N+2}\,\frac{1}{\big(C_{GN}\big)^{(N-2)/4}}. 
\end{equation}
We next modify the definition of $\mathcal{ME}$ and $\mathcal{G}$ in \eqref{norMEG} and write  
\begin{equation*}
\mathcal{E}[u] = \frac{E[u]}{E[Q]}\quad\text{and}\quad \mathcal{G}[u(t)]=\frac{\|\nabla u(t)\|_{L^2(\R^N)}}{\|\nabla Q\|_{L^2(\R^N)}},
\end{equation*}
where the value of $\|\nabla Q\|_{L^2(\R^N)}$ is determined from \eqref{EgradQ} via the sharp constant defined in \eqref{sharpcon}. Now, as a consequence of the proof of Theorem \ref{dichotomy} in \cite{AKAR1}, global existence holds in $s_c=1$ case along with the blow-up in finite time for finite variance. We state the following analogous result (excluding scattering and blow-up for infinite variance, which will be considered elsewhere) for the energy-critical case.
\begin{theorem}\label{ec-dichotomy}
 Let $s_c=1$ and $u(t)$ be the solution of \eqref{gH} with $u_0\in\dot{H}^1(\R^N)$. Assume that $\mathcal{E}[u_0] < 1$.
 \begin{enumerate}
	\item If $\mathcal{G}[u_0] < 1$, then the solutions exists globally in time for all $t\in\R$.
		\item If $\mathcal{G}[u_0] > 1$ and either $u_0$ is radial or $xu_0\in L^2(\R^N)$, then $u(t)$ blows-up in finite time.  
	\end{enumerate}
\end{theorem}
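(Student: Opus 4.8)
The plan is to transfer the energy-trapping (variational) mechanism behind Theorem \ref{dichotomy} to the energy-critical level $s_c=1$, where the conserved energy alone plays the role of the scale-invariant threshold quantity. First I would record the coercive consequence of the sharp Sobolev-type inequality \eqref{eSob}. Writing $y(t)=\|\nabla u(t)\|_{L^2}^2$ and combining $E[u]=\tfrac12\|\nabla u\|_{L^2}^2-\tfrac{1}{2p}Z(u)$ with \eqref{eSob} gives $E[u]\ge h(y)$, where $h(y)=\tfrac12\,y-\tfrac{C_{GN}}{2p}\,y^{p}$. The function $h$ vanishes at $0$, increases strictly up to its unique critical point $y_0=C_{GN}^{-1/(p-1)}$, and decreases strictly afterwards; the computation behind \eqref{EgradQ} (namely $\|\nabla Q\|_{L^2}^2=Z(Q)=C_{GN}\|\nabla Q\|_{L^2}^{2p}$) identifies $y_0=\|\nabla Q\|_{L^2}^2$ and $h(y_0)=\tfrac{p-1}{2p}\|\nabla Q\|_{L^2}^2=E[Q]$. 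Hence the hypothesis $\mathcal{E}[u_0]<1$ is precisely $E[u_0]<\max h$.

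Next I would establish the energy trapping. The map $t\mapsto y(t)$ is continuous on the maximal existence interval, and energy conservation gives $h(y(t))\le E[u(t)]=E[u_0]<h(y_0)$, so $y(t)$ never meets $y_0$. Since the sublevel set $\{y\ge 0:\,h(y)\le E[u_0]\}$ is the disjoint union $[0,y_-]\cup[y_+,\infty)$ for some $y_-<y_0<y_+$, continuity confines $y(t)$ to the component containing $y(0)$ for all time. Consequently $\mathcal{G}[u_0]<1$ forces $\|\nabla u(t)\|_{L^2}\le\sqrt{y_-}<\|\nabla Q\|_{L^2}$ for every $t$, while $\mathcal{G}[u_0]>1$ forces $\|\nabla u(t)\|_{L^2}\ge\sqrt{y_+}>\|\nabla Q\|_{L^2}$ for every $t$; this is the $s_c=1$ analogue of the gradient alternative of Theorem \ref{dichotomy}.

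For part (1) the remaining task is to upgrade the uniform subthreshold bound $\|\nabla u(t)\|_{L^2}\le\sqrt{y_-}<\|\nabla Q\|_{L^2}$ to $T^\ast=+\infty$. This is the main obstacle, and it is genuinely harder than in the intercritical regime: at $s_c=1$ a uniform $\dot{H}^1$ bound does not by itself continue the solution, because the local existence time furnished by Theorem \ref{lwp} depends on the profile of the data (through $\|e^{it\Delta}u_0\|_{S(\dot{H}^{s_c})}$) and not merely on $\|u_0\|_{\dot{H}^1}$. I would therefore import the concentration analysis underlying the proof of Theorem \ref{dichotomy} (in the spirit of \cite{KM06}): a finite-time blow-up at the energy-critical level must concentrate at least the ground-state amount $\|\nabla Q\|_{L^2}$ of gradient, which the trapping forbids. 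Equivalently, the coercivity supplied by the trapping excludes the minimal blow-up solution produced by the concentration--compactness scheme, yielding global existence; the additional rigidity step needed for scattering is deferred.

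For part (2) I would argue by the virial/convexity method. At $s_c=1$ the second virial identity of Section \ref{main1} becomes $V_{tt}(t)=16\,p\,E[u_0]-8(p-1)\|\nabla u(t)\|_{L^2}^2$. Combining the relation $8(p-1)\|\nabla Q\|_{L^2}^2=16\,p\,E[Q]$ (from $E[Q]=\tfrac{p-1}{2p}\|\nabla Q\|_{L^2}^2$) with the trapping $\|\nabla u(t)\|_{L^2}^2\ge y_+>\|\nabla Q\|_{L^2}^2$ and $E[u_0]<E[Q]$ yields the uniform bound $V_{tt}(t)\le 16\,p\,E[u_0]-8(p-1)\,y_+<0$, a negative constant which I denote $-c_0$. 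For finite-variance data $xu_0\in L^2$ this makes the nonnegative quantity $V(t)$ lie below a downward parabola, so $V$ must reach $0$ in finite time, forcing $T^\ast<\infty$. For radial data (allowing infinite variance) I would instead run the truncated virial identity with a radial cutoff and control the error terms --- in particular the nonlocal convolution contribution, exactly as in the bound devised for Theorem \ref{blowupL} --- via the radial Sobolev embedding, so as to preserve a strictly negative upper bound for the localized $V_R''$ on the relevant scale; this again produces finite-time blow-up. The nonradial infinite-variance case is set aside, as stated.
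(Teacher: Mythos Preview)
Your approach is correct and is essentially the route the paper intends: the paper provides no standalone proof of Theorem~\ref{ec-dichotomy}, but simply asserts that ``the proof of global existence in Theorem~\ref{dichotomy} part (1) and blow-up in part 2(a) will work for $s_c=0$ and $s_c=1$'' and refers back to \cite{AKAR1}. Your energy-trapping argument via the sharp inequality \eqref{eSob}, the identification $y_0=\|\nabla Q\|_{L^2}^2$ and $h(y_0)=E[Q]$, and the virial computation $V_{tt}=16pE[u_0]-8(p-1)\|\nabla u\|_{L^2}^2$ for finite variance (together with the localized virial for radial data) are exactly what that reference carries out at the intercritical level and specialize correctly to $s_c=1$.

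Where your write-up is actually more careful than the paper is in part (1): you correctly flag that at $s_c=1$ the uniform bound $\|\nabla u(t)\|_{L^2}^2\le y_-$ coming from the trapping does \emph{not} by itself extend the solution globally, because the local lifespan in Theorem~\ref{lwp} depends on the profile of the data through $\|e^{it\Delta}u_0\|_{S(\dot H^{1})}$ rather than on $\|u_0\|_{\dot H^1}$ alone. The paper glosses over this point with its one-line reference. Your proposed resolution --- that a hypothetical finite-time blow-up must concentrate at least $\|\nabla Q\|_{L^2}^2$ of kinetic energy, contradicting $y_-<\|\nabla Q\|_{L^2}^2$ --- is the standard Kenig--Merle mechanism \cite{KM06} and is the right way to close the gap; note only that this step already uses a piece of the concentration--compactness machinery (profile decomposition / minimal-element extraction), so it is heavier than the subcritical continuation argument even though the conclusion (global existence without scattering) is weaker.
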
   
 \subsection{Gaussian initial data} We are now ready to consider examples, for which we take the Gaussian initial data of the form  
\begin{equation}\label{gid}
u_g(x,0) = \beta e^{-\frac{1}{2}\gamma|x|^2},\quad x\in\R^N,\,\,\,\beta,\gamma\in(0,\infty). 
\end{equation}
Then, the mass and initial variance of Gaussian data \eqref{gid} are
\begin{equation*}
M[u_g] = \beta^2\left(\frac{\pi}{\gamma}\right)^{N/2},\quad V(0) = \frac{\beta^2\,N\,\pi^{N/2}}{2\,\gamma^{\frac{N}{2}+1}}.
\end{equation*} 
For the convenience of the energy calculation we also record 
\begin{equation*}
\|\nabla u_g\|_{L^2(\R^N)}^2 = \frac{N\,\pi^{N/2}}{2}\,\frac{\beta^2}{\gamma^{(N-2)/2}}.
\end{equation*}
In what follows we consider mostly examples in 3d, with the convolution term 
$\frac1{|x|^{N-2}} \ast \bullet$ as it is the fundamental solution of the Laplacian. 

\subsubsection{Energy-subcritical case} Consider $p=3$ and $b=1$ in dimension $N=3$. In this case $s_c=\frac{1}{2}$, then \eqref{gH} takes the form
\begin{equation}\label{H12c}
	iu_t + \Delta u + \Big(|x|^{-1}\ast|u|^3\Big)|u|u = 0.
\end{equation}
The energy for \eqref{H12c} is
\begin{align*}
	E[u_g] = \frac{\pi^{3/2}}{4}\,\frac{\beta^2}{\gamma^{1/2}} \left(3-\frac{16\,\pi}{3^{7/2}}\,\frac{\beta^4}{\gamma^2}\right).
\end{align*}
The Pohozhaev identities take the form
\begin{equation*}
\|\nabla Q\|_{L^2(\R^3)}^2 = 2\|Q\|_{L^2(\R^3)}^2\quad\text{and}\quad Z(Q) = 3\|Q\|_{L^2(\R^3)}^2,
\end{equation*} 
which yields $E[Q] = \frac{1}{2} M[Q]$, where we computed (numerically) $M[Q] \approx 5.2339$ (for example, see \cite{YRZ3}).

We obtain the following thresholds, which are schematically represented in Figure \ref{F:1}:
\begin{figure}[ht]
\includegraphics[width=0.8\textwidth]{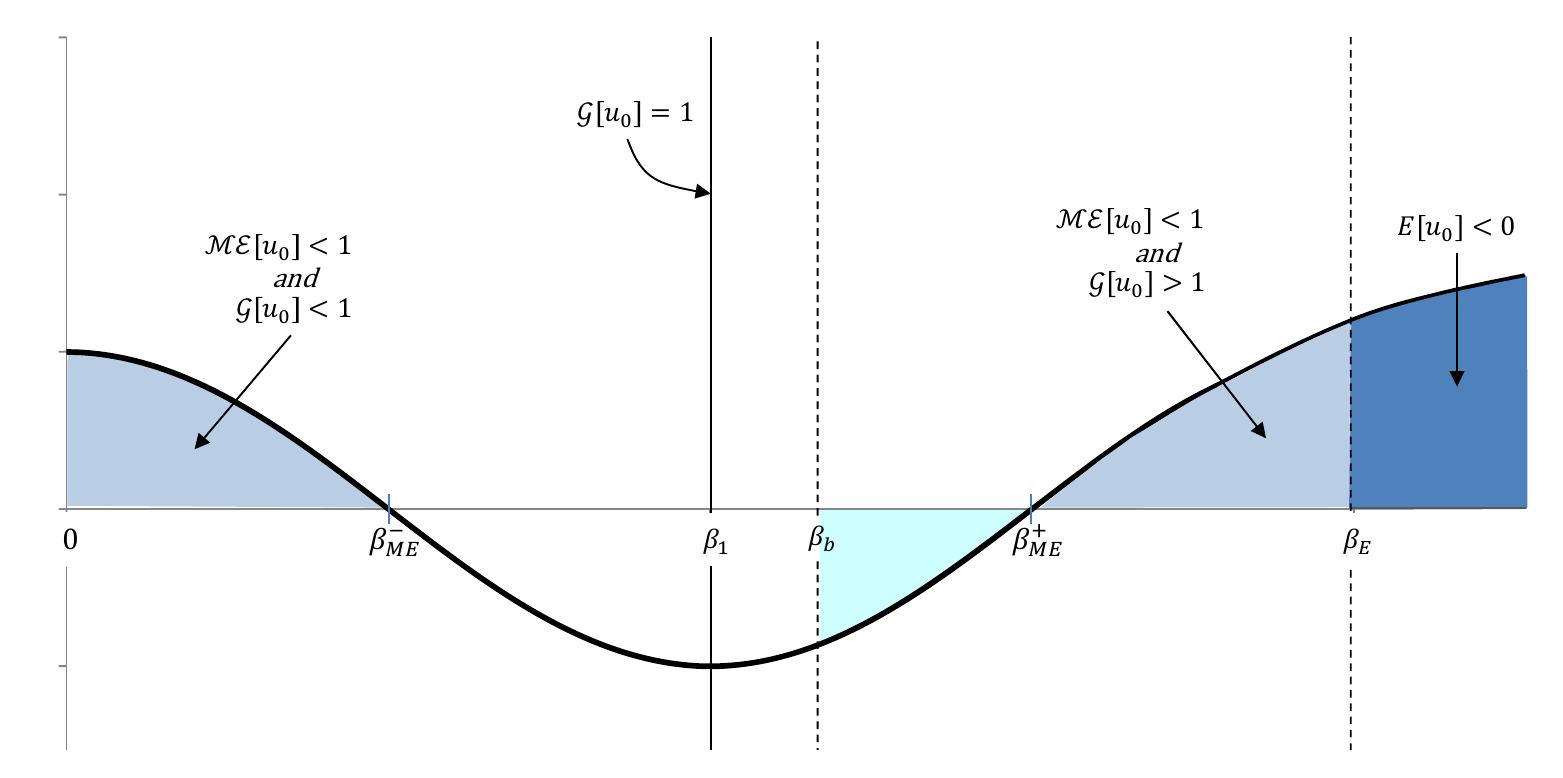}
\caption{Thresholds for the Gaussian data $u_0(x) = \beta \, e^{-|x|^2}$ in the energy-subcritical case, see \eqref{ex1a}-\eqref{ex1d}.} 
\label{F:1} 
\end{figure}
\begin{itemize}
\item blow-up with negative energy: $E[u_g] < 0$ if
	\begin{equation}\label{ex1a}
	\frac{\beta}{\sqrt{\gamma}} > \beta_E \equiv \frac{3^{9/8}}{2\,\pi^{1/4}} \approx 1.29, \qquad \qquad
	\end{equation} 
\item blow-up criteria Theorem \ref{blowupL} for positive energy: condition \eqref{L16real} gives
    \begin{equation}\label{ex1b}
		\frac{\beta}{\sqrt{\gamma}} > \beta_b \equiv \frac{3^{9/8}}{2^{5/4} \,\pi^{1/4}} \approx 1.08689,
	\end{equation}
\item  the mass-energy condition $\mathcal{ME}[u_g] < 1$ in Theorem \ref{dichotomy} yields
$$
\frac{\pi^3\,\beta^4}{4\,\gamma^2}\left(3 - \frac{16}{3^{7/2}}\,\frac{\beta^4}{\gamma^2}\right) < \frac{1}{2} \|Q\|_{L^2(\R^3)}^4,
$$ 
which implies
\begin{equation}\label{ex1c}
\frac{\beta}{\sqrt{\gamma}} < \beta_{ME}^- \approx 0.9586 \quad 
\text{and}\quad \frac{\beta}{\sqrt{\gamma}} > \beta_{ME}^+ \approx 1.1812.
\end{equation}
\item the mass-gradient condition $\mathcal{G}[u_g] = 1$ from Theorem \ref{dichotomy} (useful for the separation of the mass-energy conditions above in \eqref{ex1c}) gives
\begin{equation}\label{ex1d}
\frac{\beta}{\sqrt{\gamma}} < \beta_1 \equiv \frac{2^{1/2}}{3^{1/4}\,\pi^{3/4}}\,\|Q\|_{L^2(\R^3)} \approx 1.0418.
\end{equation}
\end{itemize}
We conclude from \eqref{ex1a}, \eqref{ex1b}, \eqref{ex1c} and \eqref{ex1d} that analytically proved ranges are: for scattering is below $\beta_{ME}^- \approx 0.9586$ and for blow-up is above $\beta_b \approx 1.08689$ (see Figure 1). 

\subsubsection{Energy-critical case} 
Consider $p=5$ and $b=1$ in the dimension $N=3$ and write the equation 
\begin{equation}\label{criticalH}
	iu_t + \Delta u + \Big(|x|^{-1}\ast|u|^5\Big)|u|^3u = 0, 
\end{equation}
which is energy-critical. The corresponding energy for \eqref{criticalH} is
\begin{equation*}
	E[u_g] = \frac{\pi^{3/2}}{4}\,\frac{\beta^2}{\gamma^{1/2}}\left(3 - \frac{16\,\pi}{5^{7/2}}\,\frac{\beta^8}{\gamma^2}\right).
\end{equation*} 
From \eqref{EQ} and \eqref{EQeq} we have that   
\begin{equation*}
	Q = \left(\frac{9}{4\,\pi}\right)^{\frac{1}{8}}\frac{1}{\sqrt{1+|x|^2}},
\end{equation*}
which solves 
\begin{equation*}
\Delta Q + \left(\frac{1}{|x|}\ast Q^5\right)Q^4 = 0,
\end{equation*}
where $\frac{1}{|x|}\ast f = 4\pi\,(-\Delta)^{-1}f$ in 3d. From \eqref{EgradQ} and \eqref{sharpcon}, we obtain 
\begin{equation*}
	\|\nabla Q\|_{L^2(\R^3)}^2 = \frac{3^{3/2}\,\pi^{7/4}}{2^{5/2}}\quad\text{and}\quad E[Q]=\frac{2}{5}\,\|\nabla Q\|_{L^2(\R^3)}^2 = \frac{3^{3/2}\,\pi^{7/4}}{2^{3/2}\,5}.
\end{equation*}
Then
\begin{itemize}
	\item the negative energy condition, $E[u_g]<0$ yields
	\begin{equation}\label{ex2a}
		\frac{\beta}{\gamma^{1/4}} > \beta_E \equiv \frac{5^{7/16}\,3^{1/8}}{2^{1/2}\,\pi^{1/8}}\approx 1.42161,
	\end{equation} 
	\item blow-up condition \eqref{L16real} (Theorem \ref{blowupL}) gives
	\begin{equation}\label{ex2b}
		\frac{\beta}{\gamma^{1/4}} > \beta_b \equiv \frac{5^{5/16}\,3^{1/8}}{2^{1/2}\,\pi^{1/8}} \approx 1.16254,
	\end{equation}
	\item the energy condition $\mathcal{E}[u_g] < 1$ in Theorem \ref{ec-dichotomy} yields
	$$
	\frac{\pi^{3/2}}{4}\,\frac{\beta^2}{\gamma^{1/2}}\left(3 - \frac{16\,\pi}{5^{7/2}}\,\frac{\beta^8}{\gamma^2}\right) < \frac{3^{3/2}\,\pi^{7/4}}{2^{3/2}\,5},
	$$ 
	which implies
	\begin{equation}\label{ex2c}
		\frac{\beta}{\gamma^{1/4}} < \beta_E^- \approx 0.812225 \quad \text{and}\quad \frac{\beta}{\gamma^{1/4}} > \beta_E^+ \approx 1.34423,
	\end{equation}
	and the gradient condition for global existence $\mathcal{G}[u_g] < 1$ gives
	\begin{equation}\label{ex2d}
		\frac{\beta}{\gamma^{1/4}} < \beta_1 \approx 0.902925.
	\end{equation}
\end{itemize}
We conclude from \eqref{ex2a}, \eqref{ex2b}, \eqref{ex2c} and \eqref{ex2d} that analytically proved ranges are: for global existence is below $\beta_E^- \approx 0.812225$ and for blow-up is above $\beta_b \approx 1.16254$ (see Figure 2).

\begin{figure}[ht]
\includegraphics[width=0.8\textwidth]{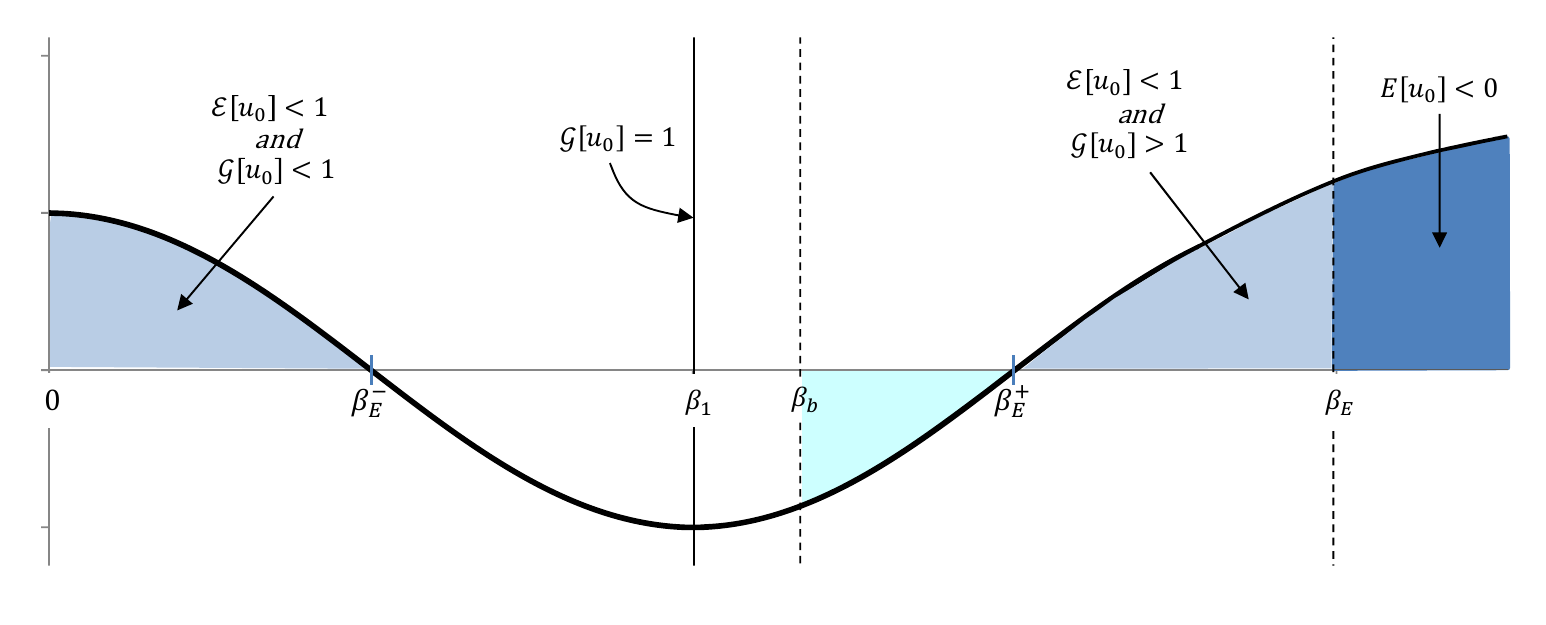}
\caption{Thresholds for Gaussian data $u_0 = \beta \, e^{-|x|^2}$ in the energy-critical case,see \eqref{ex2a}-\eqref{ex2d} and \eqref{ex2ia}-\eqref{ex2id}.} 
\label{F:2}
\end{figure}

For convenience, we provide one more energy-critical example in $4$d, 
\begin{equation}\label{criticalH1}
	iu_t + \Delta u + \Big(|x|^{-2}\ast|u|^3\Big)|u|u = 0.
\end{equation} 
The energy for \eqref{criticalH1} is given by
\begin{equation*}
	E[u_g] = \pi^2\,\frac{\beta^2}{\gamma} \left(1-\frac{\pi^2}{81}\,\frac{\beta^4}{\gamma^2}\right).
\end{equation*} 
Again, from \eqref{EQ} and \eqref{EQeq}, we have that  
$$
Q =  \frac{2}{\sqrt{\pi}}\,\frac{1}{1+|x|^2},
$$
which solves
\begin{equation*}
	\Delta Q + \left(\frac{1}{|x|^2}\ast Q^3\right)Q^2 = 0,
\end{equation*}
where $\frac{1}{|x|^2}\ast f = 4\pi^2(-\Delta)^{-1}f$ in $4$d. We compute from \eqref{EgradQ} and \eqref{sharpcon}
$$
\|\nabla Q\|_{L^2(\R^4)}^2 = \frac{16\,\pi}{3}\quad\text{and}\quad E[Q]=\frac{1}{3}\|\nabla Q\|_{L^2(\R^4)}^2 = \frac{16\,\pi}{9}.
$$
Then
\begin{itemize}
	\item the negative energy condition corresponds to
	\begin{equation}\label{ex2ia}
		\frac{\beta}{\sqrt{\gamma}} > \beta_E \approx 1.69257,
	\end{equation}
	\item blow-up occurs (according to Theorem \ref{blowupL} condition \eqref{L16real}) when 
	\begin{equation}\label{ex2ib}
			\frac{\beta}{\sqrt{\gamma}} > \beta_b \approx 1.28607,
	\end{equation} 
	\item the energy condition $\mathcal{E}[u_g] < 1$ in Theorem \ref{ec-dichotomy} gives
	$$
	\pi^2\,\frac{\beta^2}{\gamma}\left(1 - \frac{\pi^2}{81}\,\frac{\beta^4}{\gamma^2}\right) < \frac{16\,\pi}{9},
	$$ 
	which implies
	\begin{equation}\label{ex2ic}
		\frac{\beta}{\sqrt{\gamma}} < \beta_E^- \approx 0.768792 \quad\text{and}\quad 	\frac{\beta}{\sqrt{\gamma}} > \beta_E^+ \approx 1.58845,
	\end{equation}
	and the gradient condition for global existence $\mathcal{G}[u_g] < 1$ gives
	\begin{equation}\label{ex2id}
		\frac{\beta}{\sqrt{\gamma}} < \beta_1 \approx  0.921318.
	\end{equation}
\end{itemize}
We conclude from \eqref{ex2ia}, \eqref{ex2ib}, \eqref{ex2ic} and \eqref{ex2id} that analytically proved ranges are: for global existence is below $\beta_E^- \approx 0.768792$ and for blow-up is above $\beta_b \approx 1.28607$ (see Figure 2).

\subsubsection{Energy-supercritical case} Finally, we consider $p=7$ and $b=1$ in the dimension $N=3$
\begin{equation}\label{Esuperc}
	iu_t + \Delta u + \Big(|x|^{-1}\ast|u|^7\Big)|u|^5u = 0.
\end{equation}
 In this case $s_c=\frac{7}{6}>1$, thus, the energy-supercritical regime. The energy for \eqref{Esuperc} is given by
 \begin{equation*}
 	E[u_g] = \frac{\pi^{3/2}}{4}\,\frac{\beta^2}{\gamma^{1/2}} \left(3-\frac{16\,\pi}{7^{7/2}}\,\frac{\beta^{12}}{\gamma^2}\right).
 \end{equation*}
Then, in the energy-supercritical case we have
\begin{itemize}
	\item $E[u_g] < 0$ if
	\begin{equation}\label{ex3a}
	\frac{\beta}{\gamma^{1/6}} > \beta_E \equiv \frac{3^{1/12}\,\,7^{7/24}}{2^{1/3}\,\,\pi^{1/12}}\approx 1.3946799,
	\end{equation} 
	\item condition \eqref{L16real} (Theorem \ref{blowupL}) gives
	\begin{equation}\label{ex3b}
	\frac{\beta}{\gamma^{1/6}} > \beta_b \equiv \frac{3^{1/12}\,\,7^{7/24}}{2^{7/12}\,\,\pi^{1/12}}\approx 1.17278.
	\end{equation}
\end{itemize}
Note that except for the two conditions above no other information about scattering or blow up thresholds is known in the energy-supercritical case (except for the small data shown earlier in this paper).
\begin{figure}[ht]
\includegraphics[width=0.8\textwidth]{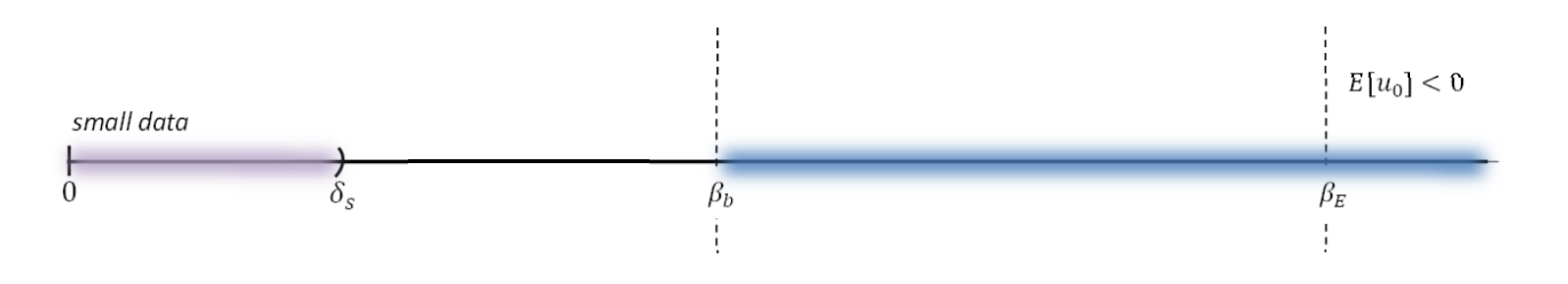}
\caption{Thresholds for Gaussian data $u_0 = \beta \, e^{-|x|^2}$ in the energy-supercritical case, see \eqref{ex3a}-\eqref{ex3b}.} 
\label{F:3}
\end{figure}

\bibliography{Andy-references}
\bibliographystyle{abbrv}

\end{document}